\newtheorem{theorem}{Theorem}
\newtheorem{definition}{Definition}
\newtheorem{proposition}{Proposition}
\newtheorem{cor}{Corollary}
\newtheorem{remark}{Remark}
\def\proof{{\bf Proof.}}
\newcommand{\Z}{\mathbb{Z}}
\newcommand{\Q}{|Q|}
\newcommand{\f}{\longrightarrow}
\def\Aut{{\hbox{Aut}\,}}
\def\Id{{\mathop{\mbox{ Id}}}}
\begin{document}
\title{On invariant (co)homology of a group}

\author[Carlos Aquino, Rolando Jimenez, Martin Mijangos  and Quitzeh Morales]{Carlos Aquino, Rolando Jimenez, Martin Mijangos  and Quitzeh Morales Mel\'endez}

\date{}

\address{Instituto de Matem\'aticas, Unidad Oaxaca,
Universidad Nacional Aut\'onoma de M\'exico,  Le\'on 2, 68000 Oaxaca de Ju\'arez, Oaxaca,
M\'exico} \email{carlos.az2@gmail.com}

\address{Instituto de Matem\'aticas, Unidad Oaxaca,
Universidad Nacional Aut\'onoma de M\'exico,  Le\'on 2, 68000 Oaxaca de Ju\'arez, Oaxaca,
M\'exico} \email{rolando@matcuer.unam.mx}

\address{Instituto de Matem\'aticas, Unidad Oaxaca,
Universidad Nacional Aut\'onoma de M\'exico,  Le\'on 2, 68000 Oaxaca de Ju\'arez, Oaxaca,
M\'exico} \email{mitm3@hotmail.com}

\address{CONACYT -- Universidad Pedag\'ogica Nacional -- Unidad 201 Oaxaca,
Camino a la Zanjita S/N, Col. Noche Buena, Santa Cruz Xoxocotl\'an, Oaxaca.
C.P. 71230}\email{qmoralesme@conacyt.mx}

\subjclass[2010] {Primary: 55N25, 55T05; secondary: 18G40, 18G35.}

\keywords{Cohomology of invariant group chains, Serre-Hochschild spectral sequence, invariant group extensions.}

\let\thefootnote\relax\footnote{The first three authors were partially supported by SEP-CONACyT Grant 284621.}

\maketitle

\begin{abstract}
There are different notions of homology and cohomology that can be defined for a group with an action of another group by group automorphisms. In this paper we address three natural questions that arise in this context. Namely, the relation of these notions with the usual (co)homology of a semidirect product, the interpretation of the first homology group as some kind of abelianization and the classification of (invariant) group extensions.
\end{abstract}

\section*{Introduction}
In \cite{Knudson} Knudson defined homology and cohomology groups for a group $G$ with an action of another group $Q$ by group automorphisms and computed homology groups for some $\mathbb{Z}/2$ actions on cyclic groups. Later, in \cite{JLM} it was given a formula for the first homology group in terms of the action and a spectral sequence was constructed to compute homology in some cases. 

It is natural to ask in this context about the relation of Knudson groups with classical constructions and results on (co)homology of groups. In this paper, we will do the following:
\begin{itemize}
\item relate Knudson homology groups with the homology of the semidirect product $G\rtimes Q$ as an application of Hochschild-Serre  spectral sequence. This generalizes the spectral sequence in \cite{JLM};
\item give an interpretation of the first homology group as a so called \textit{weighed} abelianization, a suitable group made up by orbits of the action. The relation with classic group constructions is also adressed. And
\item define new cohomology groups and study the notion of invariant group extensions 
related to them. 
\end{itemize}
The cohomology groups defined coincide with those of Knudson in some very special cases. Some properties of the new groups are studied. The corresponding homology groups do not coincide in general with those of Knudson.

\section{Knudson groups, invariant resolutions and semidirect product}


Let $Q$ and $G$ be groups, and $Q\times G \to G $ an action of $Q$ on $G$ by 
group automorphisms, i.e.
$$q(g_1g_2)= q(g_1)q(g_2),\qquad q(1)= 1$$
 for every $ q\in Q, g_1,g_2 \in G$. In this case, we call $G$ a $Q$-group.
 Let $A$ be an abelian group with trivial $G$- and $Q$-actions and denote
by $C_*(G, A)$ the complex $C_*(G)\otimes A$ where $C_*(G)$ is the bar complex.
In this case, there is an induced action of $Q$ on $C_*(G, A)$ given by
\begin{equation}\label{accion-sobre-c(g)}
q([g_1|\cdots|g_n]\otimes a)=[qg_1|\cdots |qg_n]\otimes a.
\end{equation}
According to \cite{Knudson}, define the groups of homology and cohomology 
of invariant group chains as
\begin{align}
 H_*^Q(G, A)=&H_*(C(G,A)^Q)\\
 H_Q^*(G, A)=&H^*(Hom(C(G)^Q, A)).
\end{align}
Since the differential of the chain complex $C_*(G, A)$ is $Q$-equivariant,
there is a well defined action of $Q$ on the homology groups $H_*(G, A)$.
 The next theorem is proved in \cite{Knudson} when $Q$ is a finite group.
 
\begin{definition}
Let $A$ be an abelian group and $n\in \Z$ . We say that $n$ is invertible in 
$A$ if the group homomorphism $\psi:A\f A$ given by $\psi(a)=na$ for $a\in A$ is
an isomorphism.
\end{definition}
 
\begin{theorem}(\cite{Knudson} Proposition 3.3)\label{puntos-fijos-H}
 Let $G$ be a $Q$-group and let $A$ be an abelian group with trivial $G$- and 
 $Q$-actions. Assume that $\Q$ is invertible in $A$. Then the natural map
 $$i_*:H_*^Q(G, A)\f H_*(G, A)^Q$$ is an isomorphism.
\end{theorem}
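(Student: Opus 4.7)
The plan is to use the standard averaging (norm) operator, which is available because $|Q|$ is invertible in the coefficient group. Note that invertibility of $|Q|$ in $A$ implicitly requires $|Q|$ to be a positive integer, so we may assume $Q$ is finite. Since $A$ becomes a $\mathbb{Z}[1/|Q|]$-module, so does $C_*(G,A)=C_*(G)\otimes A$, and in particular division by $|Q|$ is well defined on chains.

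First I would define the norm map $\bar{N}\colon C_*(G,A)\f C_*(G,A)$ by
$$\bar{N}(c)=\frac{1}{|Q|}\sum_{q\in Q}q\cdot c.$$
Because $Q$ acts by chain maps (the differential commutes with the action in (\ref{accion-sobre-c(g)})), $\bar{N}$ is itself a chain map. Moreover, its image lies in $C_*(G,A)^Q$, and its restriction to $C_*(G,A)^Q$ is the identity; hence $\bar{N}$ is an idempotent chain endomorphism projecting $C_*(G,A)$ onto $C_*(G,A)^Q$, and the inclusion $\iota\colon C_*(G,A)^Q\hookrightarrow C_*(G,A)$ is split by $\bar{N}$ as chain complexes (after inverting $|Q|$).

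Next I would verify that $i_*$ is a bijection by using $\bar{N}$ on representatives. For injectivity, suppose $[c]\in H_*^Q(G,A)$ satisfies $i_*[c]=0$, so there exists $b\in C_*(G,A)$ with $c=\partial b$. Applying $\bar{N}$ and using that $c$ is $Q$-invariant gives $c=\bar{N}(c)=\bar{N}(\partial b)=\partial\bar{N}(b)$, with $\bar{N}(b)\in C_*(G,A)^Q$, so $[c]=0$ in $H_*^Q(G,A)$. For surjectivity, take $[c]\in H_*(G,A)^Q$, meaning $c$ is a cycle and $q\cdot c-c=\partial d_q$ for each $q\in Q$. Then $\bar{N}(c)$ is a $Q$-invariant cycle representing a class in $H_*^Q(G,A)$, and
$$\bar{N}(c)-c=\frac{1}{|Q|}\sum_{q\in Q}(q\cdot c-c)=\partial\Bigl(\frac{1}{|Q|}\sum_{q\in Q}d_q\Bigr),$$
so $i_*[\bar{N}(c)]=[c]$ in $H_*(G,A)^Q$.

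I do not anticipate a genuine obstacle: the main point is simply that dividing by $|Q|$ is legitimate on the chain level. The only care needed is to verify that the identification $C_*(G,A)=C_*(G)\otimes A$ together with the hypothesis on $A$ does make $\bar{N}$ an honest chain map with values in $C_*(G,A)$, and that the $Q$-action on homology arising from (\ref{accion-sobre-c(g)}) is the one appearing in the statement, which is immediate from the $Q$-equivariance of the bar differential.
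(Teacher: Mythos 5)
Your argument is correct, and it coincides with the standard proof of the cited result: the paper itself does not reprove this theorem (it only quotes Knudson, Proposition 3.3), and Knudson's proof is exactly this averaging argument, using that multiplication by $|Q|$ on $C_*(G)\otimes A$ equals $\mathrm{id}\otimes(|Q|\cdot)$ and is therefore invertible, so that the projection $\bar{N}$ is a well-defined chain map onto the invariant subcomplex. Your verification of injectivity and surjectivity via $\bar{N}$ on representatives is complete, so there is nothing to add.
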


The aim of this section is to prove the following:
\begin{theorem}Let $Q$ be a finite group. Let $G$ be a $Q$-group and let $A$ be an abelian group with trivial $G$- and 
 $Q$-actions. If $|Q|$ is invertible in $A$, then  
 $H_q^Q(G, A)\cong H_q(G\rtimes Q, A)$.
\end{theorem}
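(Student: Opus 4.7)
The plan is to use the Hochschild–Serre spectral sequence for the split extension $1\to G\to G\rtimes Q\to Q\to 1$, show it collapses under the invertibility hypothesis, and then identify the surviving edge term with $H_q^Q(G,A)$ by combining Theorem \ref{puntos-fijos-H} with the standard isomorphism between invariants and coinvariants.

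First I would set up the homological Hochschild–Serre spectral sequence
\[
E^2_{p,q}=H_p(Q, H_q(G,A))\;\Rightarrow\; H_{p+q}(G\rtimes Q, A),
\]
where $H_q(G,A)$ is regarded as a $Q$-module via the action $Q$ induces on the bar complex $C_*(G,A)$, which is exactly the action appearing in the definition of $H^Q_*(G,A)$. The next step is to observe that since $|Q|$ is invertible in $A$, multiplication by $|Q|$ is an automorphism of the chain complex $C_*(G)\otimes A$, hence of every homology group $H_q(G,A)$. Therefore $H_q(G,A)$ is a $\mathbb{Z}[1/|Q|][Q]$-module, and by the classical vanishing theorem for the (co)homology of a finite group acting on a module where its order is invertible, one has $H_p(Q, H_q(G,A))=0$ for all $p>0$.

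Consequently the spectral sequence collapses on the vertical axis, there are no extension problems along the diagonal, and we get a natural isomorphism
\[
H_q(G\rtimes Q, A)\;\cong\; E^2_{0,q}\;=\;H_0(Q, H_q(G,A))\;=\;H_q(G,A)_Q.
\]
To close the loop I would invoke the standard averaging argument: for any $\mathbb{Z}[Q]$-module $M$ in which $|Q|$ is invertible, the composition $M^Q\hookrightarrow M\twoheadrightarrow M_Q$ is an isomorphism, with inverse induced by $m\mapsto \tfrac{1}{|Q|}\sum_{q\in Q} qm$. Applied to $M=H_q(G,A)$ this yields $H_q(G,A)_Q\cong H_q(G,A)^Q$, and combining with Theorem \ref{puntos-fijos-H} gives
\[
H_q(G\rtimes Q, A)\;\cong\; H_q(G,A)^Q\;\cong\; H_q^Q(G,A).
\]

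The only delicate point I anticipate is verifying that the $Q$-module structure on $H_q(G,A)$ appearing at the $E^2$-page of Hochschild–Serre coincides (up to the natural identifications) with the one Knudson uses in the definition of $H_*^Q(G,A)$; this should follow from functoriality of the bar resolution together with the fact that the extension is split, so that conjugation by elements of $Q\subset G\rtimes Q$ on $G$ recovers the original action. Everything else is routine application of spectral sequence machinery and the invertibility hypothesis.
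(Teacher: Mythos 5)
Your proposal is correct and follows essentially the same route as the paper: the Hochschild--Serre spectral sequence for the split extension, vanishing of $H_p(Q,H_q(G,A))$ for $p>0$ from invertibility of $|Q|$ in $H_*(G,A)$, identification of $E^2_{0,q}=H_q(G,A)_Q$ with $H_q(G,A)^Q$, and then Theorem \ref{puntos-fijos-H}. The one ``delicate point'' you deferred --- that the conjugation action of $Q$ on $H_*(G,A)$ agrees with Knudson's diagonal action on the bar complex --- is exactly what the paper checks explicitly via the augmentation-preserving chain map $\tau_q(g_0,\dots,g_n)=(qg_0,\dots,qg_n)$, and your sketch of why it holds (splitness, so conjugation by $Q$ restricts to the original action on $G$) is the right reason.
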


First we give some properties of the group homology in the case when the coefficients are invertible. Then we will use Theorem \ref{puntos-fijos-H} to relate the homology of invariant group chains to the homology of the semidirect product $G\rtimes Q$ through the Hochschild-Serre  spectral sequence. 

At the end of this section, we will show the usefulness of this theorem  through an example. Such an example is presented in \cite{JLM} but here is treated in a more simple way.

Let $Q$ be a finite group and $A$ be a $Q$-module. Consider the homomorphism 
 $N:A\f A$, such that 
 $N(a)=\sum_{q\in Q}qa$.
Since $N$ satisfies $N(qa)=N(a)$ and $im\, N\subseteq A^Q$, the homomorphism
 $\bar{N}:A_Q\f A^Q$ given by
         $\bar{N}(\bar{a})= \sum_{q\in Q}qa$
is well defined and is called the norm map.



\begin{proposition}\label{norma-iso}\label{inv-en-H}
 Let $G$ be a group and $A$ be a $G$-module. 
 \begin{enumerate}[a)]
 \item If $n\in \Z$ is invertible in $A$, then $n$ is invertible in $H_*(G, A)$.
  \item  If $G$ is finite and $|G|$ is invertible in $A$, then $\bar{N}:A_G\f A^G$ is an isomorphism.
 \end{enumerate}
\end{proposition}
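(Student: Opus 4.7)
\textbf{Plan for Proposition \ref{inv-en-H}.} The two parts are essentially independent, and both rely on very standard arguments, so I would treat them separately.

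For (a), the plan is to exploit the functoriality of $H_*(G,-)$ in the coefficient module. Let $\psi\colon A\to A$ denote multiplication by $n$; by hypothesis this is an isomorphism of abelian groups, and since the $G$-action on $A$ is trivial (or more generally because $\psi$ is $\Z$-linear and $G$ acts by $\Z$-linear maps) $\psi$ is a map of $G$-modules. At the chain level, $\psi$ induces $\id\otimes\psi$ on $C_*(G)\otimes A$, which is itself just multiplication by $n$; hence the induced map $\psi_*$ on $H_*(G,A)$ is multiplication by $n$. On the other hand, functoriality of homology turns the isomorphism $\psi$ into an isomorphism $\psi_*$. Putting these two descriptions together shows that multiplication by $n$ is invertible on $H_*(G,A)$.

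For (b), I would exhibit an explicit two-sided inverse of $\bar N$. Because $|G|$ is invertible in $A$, we may define
\begin{equation*}
s\colon A^G\longrightarrow A_G, \qquad s(a)=\tfrac{1}{|G|}\,\bar a,
\end{equation*}
where $\bar a$ is the class of $a$ in $A_G$. A direct check gives $\bar N(s(a))=\tfrac{1}{|G|}\sum_{g\in G}ga=a$ for $a\in A^G$ and $s(\bar N(\bar a))=\tfrac{1}{|G|}\sum_{g\in G}\overline{ga}=\tfrac{1}{|G|}\cdot|G|\,\bar a=\bar a$ in $A_G$, using the defining relation $\overline{ga}=\bar a$. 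Hence $s$ is a two-sided inverse and $\bar N$ is an isomorphism.

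Neither part contains any real obstacle: (a) is pure functoriality, and (b) is a one-line construction of an inverse. The only subtle point worth flagging is that in (a) one must recognize that $\psi$ really is a morphism of $G$-modules (so that functoriality applies in the category of $G$-modules, not just abelian groups), which is immediate here because $A$ has the trivial $G$-action but is also true in the more general setting the statement alludes to.
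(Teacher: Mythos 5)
Your proposal is correct. Part (a) is exactly the paper's argument: the paper disposes of it with the single phrase ``follows from functoriality of $H_*(G,A)$ in the second coordinate,'' and your expansion (multiplication by $n$ is a $G$-module automorphism of $A$, it induces multiplication by $n$ on the chain level and hence on $H_*(G,A)$, and functoriality makes that an isomorphism) is just the detail behind that phrase. For part (b) you take a mildly different route: the paper invokes the standard exercise in Brown (\S 3.1, p.~59) that $|G|$ annihilates $\ker\bar N$ and $\mathrm{coker}\,\bar N$, and concludes since $|G|$ acts invertibly; you instead build the two-sided inverse $s(a)=\tfrac{1}{|G|}\bar a$ directly. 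Both are correct and amount to the same computation; yours has the advantage of being self-contained, while the paper's is shorter because it outsources the work to a cited fact. The only point you should make explicit in your version of (b) is that ``$\tfrac{1}{|G|}$'' means the inverse of multiplication by $|G|$ on $A$, which is $G$-equivariant (being the inverse of a $\Z$-linear, $G$-equivariant isomorphism) and therefore both preserves $A^G$ and descends to $A_G$; this is what legitimizes writing $\tfrac{1}{|G|}\sum_{g}\overline{ga}=\tfrac{1}{|G|}|G|\bar a$ inside $A_G$. With that one-line remark added, the argument is complete. (A cosmetic point in (a): for a general, not necessarily trivial, $G$-module $A$ the relevant complex is $B_*(G)\otimes_G A$ rather than $C_*(G)\otimes A$, but the map induced by $\psi$ is still multiplication by $n$ there, so nothing changes.)
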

\begin{proof}
a) follows from functoriality of $H_*(G, A)$ in the second coordinate. 
It is an exercise in \cite[$\S$3.1, p.59]{brown} to check that $|G|$ annihilates $ker\,\bar{N}$ and $coker\, \bar{N}$, which yields b) in the present situation.
$\hfill\square$
\end{proof}


From now on $G$ will denote an arbitrary  $Q$-group with $Q$ a finite group, 
$\varphi:Q\f \Aut(G)$ the group homomorphism inducing the action of $Q$ on $G$, 
and $A$ an abelian group with trivial  $G$- and $Q$-actions.

The $Q$-action on $G$ induces a short exact sequence of groups
$$1\f G\f G\rtimes_\varphi Q\f Q\f 1.$$
We will identify the subgroup 
$G\times \{e\}$ of $G\rtimes_\varphi Q$ with $G$ and $\{e\}\times Q$ with $Q$.
 There is an associated Hochschild-Serre spectral sequence
\begin{align} \label{suc-esp}
 E^2_{pq}=H_p(Q, H_q(G, A))\Rightarrow H_{p+q}(G\rtimes_\varphi  Q, A)
\end{align}
where the action of $Q$ on $H_q(G, A)$ is the one given in Corollary III.8.2 of \cite{brown}.
Recall that the action  of an element $q\in Q$ on $H_*(G, A)$ is given by the automorphism $c(q)_*:H_*(G,A)\f H_*(G, A)$
induced by the automorphism in the category of pairs (\cite{brown} III.8)
\begin{align*}
 c(q)=(\alpha_q, id_A):(G, A)&\f (G, A)\\
     (g,m )&\longmapsto (qgq^{-1}, m)
\end{align*}
where $\alpha_q:G\f G$ is given by $g\longmapsto qgq^{-1}$ and $id_A$ is the 
identity in $A$. Note in particular that $\varphi(q)=\alpha_q$. Given a projective resolution 
$F\stackrel{\epsilon}{\f}\Z$ on $\Z G$ in order to compute 
this action at the chain level it is necessary to find an augmentation 
preserving chain map of $G$-modules $\tau_q:F\f F$ such that 
$\tau_q(gx)=\alpha_q(g)\tau_q(x)$ for $ g\in G$ and $x\in F$. In this way, we will have
$c(q)_*=(\tau_q\otimes_G id_A)_*$. 

Consider the bar resolution 
$B_*(G)\stackrel{\epsilon}{\f}\Z$ and
\begin{align*}
 (\tau_q)_n:B_n(G)&\f B_n(G)\\
       (g_0,g_1,...,g_n)&\longmapsto (qg_0, qg_1,...,qg_n)
\end{align*}
for all $n\geq0$ where $qg_i$ represents the element $\varphi(q)(g_i)$. 
We can see that $\tau_q$ is an augmentation preserving map. Moreover,
given $g\in G$
\begin{align*}
 \tau_q(g(g_0,...,g_n))&=\tau_q(gg_0,...gg_n)&\mbox{for the action $G$ 
 on $B_n(G)$}\\
                       &=(q(gg_0),...,q(gg_n))&\\
                       &=(qgqg_0,...qgqg_n)&\mbox{since $q$ is 
                       automorphism of $G$}\\
                       &=qg(qg_0,...,qg_n)&\\
                       &=\alpha_q(g)\tau_q(g_0,...,g_n).&
\end{align*}
Hence $c(q)_*=(\tau_q\otimes_G id_A)_*$ and the action of $Q$ on 
$H_*(G, A)$ at the chain level is the diagonal action on 
$B_*(G)\otimes_G A$. Since the action of $G$ on $A$ is trivial
$$B_*(G)\otimes_G A\cong C_*(G)\otimes A$$
and the action turns out to be the diagonal action on $C_*(G)\otimes A$
which is the same action used in the definition of the homology of invariant group
chains (\ref{accion-sobre-c(g)}).

The next corollary immediately follows from the previous section.

\begin{cor}\label{norma-iso-H}
 If $\Q$ is invertible in $A$, then the norm map $H_*(G, A)_Q\f H_*(G, A)^Q$ is an isomorphism.
\end{cor}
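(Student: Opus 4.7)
The plan is to deduce this corollary as a direct consequence of the two parts of Proposition \ref{inv-en-H} applied in sequence. The key observation is that although the norm-map isomorphism in Proposition \ref{inv-en-H}(b) requires the hypothesis that $|Q|$ be invertible in the $Q$-module under consideration, we only have this hypothesis on the coefficient group $A$; we must first lift it to $H_*(G,A)$.

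First I would apply Proposition \ref{inv-en-H}(a) with $n=|Q|$: by functoriality of $H_*(G,-)$ in the coefficient argument, since multiplication by $|Q|$ is an automorphism of $A$, it induces an automorphism of $H_*(G,A)$. In other words, $|Q|$ is invertible in the abelian group $H_*(G,A)$, regarded with its $Q$-module structure coming from the action $c(q)_*$ described above.

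Next I would apply Proposition \ref{inv-en-H}(b) to the finite group $Q$ acting on the $Q$-module $M:=H_*(G,A)$. The hypothesis is now satisfied, so the norm map
\[
\bar{N}:H_*(G,A)_Q \longrightarrow H_*(G,A)^Q
\]
is an isomorphism, as required.

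There is no real obstacle here: the argument is a two-line chain of implications using results already proved. The only subtlety worth checking is that the $Q$-action on $H_*(G,A)$ to which Proposition \ref{inv-en-H}(b) is applied is indeed the one appearing in the Hochschild-Serre spectral sequence (\ref{suc-esp}); but this was verified at the chain level just above the statement of the corollary, where the action of $Q$ on $B_*(G)\otimes_G A \cong C_*(G)\otimes A$ was shown to agree with the diagonal action (\ref{accion-sobre-c(g)}). With this identification in hand, the corollary follows immediately.
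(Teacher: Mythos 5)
Your proposal is correct and follows exactly the paper's argument: apply Proposition \ref{inv-en-H}a to conclude $|Q|$ is invertible in $H_*(G,A)$, then apply Proposition \ref{norma-iso}b with the finite group $Q$ acting on the module $H_*(G,A)$. The extra remark about identifying the $Q$-action with the diagonal action at the chain level is a reasonable sanity check but adds nothing beyond what the paper already established before the corollary.
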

\begin{proof}
 By Proposition \ref{inv-en-H}a $\Q$ is invertible in $H_*(G, A)$ and the claim
 follows from Proposition \ref{norma-iso}b. $\hfill\square$
\end{proof}

\begin{theorem}
 If $\Q$ is invertible in $A$, then the $E^2_{0q}$ term of the spectral sequence
 (\ref{suc-esp}) is isomorphic to $H_q^Q(G,A)$.
\end{theorem}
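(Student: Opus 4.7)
The plan is to unwind the definition of $E^2_{0q}$ and then chain together two isomorphisms already available in the text.

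First I would note that by the very definition of the $E^2$-term of the Hochschild--Serre spectral sequence (\ref{suc-esp}), one has
\[
E^2_{0q} = H_0(Q, H_q(G,A)) = H_q(G,A)_Q,
\]
the group of coinvariants of the $Q$-action on $H_q(G,A)$. So the statement reduces to producing a natural isomorphism
\[
H_q(G,A)_Q \;\cong\; H_q^Q(G,A).
\]

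Next I would apply Corollary \ref{norma-iso-H}: since $|Q|$ is invertible in $A$, the norm map
\[
\bar N \colon H_q(G,A)_Q \longrightarrow H_q(G,A)^Q
\]
is an isomorphism. This gives the first half of the chain. For the second half I would invoke Theorem \ref{puntos-fijos-H}, which under the same hypothesis provides an isomorphism
\[
i_* \colon H_q^Q(G,A) \longrightarrow H_q(G,A)^Q.
\]
Composing these, $(\bar N)^{-1}\circ i_*$ produces the desired isomorphism.

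The one subtle point — and really the only thing that needs checking — is that the $Q$-action on $H_q(G,A)$ appearing in the coinvariants $H_0(Q, H_q(G,A))$ (coming from the spectral sequence, i.e.\ the action of Corollary III.8.2 of \cite{brown}) is the same as the $Q$-action used in Knudson's definition of $H_*^Q(G,A)$ and in Theorem \ref{puntos-fijos-H}. But this coincidence has essentially been established in the discussion immediately preceding the statement: the chain-level computation there shows that $c(q)_* = (\tau_q \otimes_G \mathrm{id}_A)_*$ agrees, via the identification $B_*(G)\otimes_G A \cong C_*(G)\otimes A$, with the diagonal action (\ref{accion-sobre-c(g)}) used to define $H_*^Q(G,A)$. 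So no additional work is needed here, and the theorem follows by concatenating the two isomorphisms above.
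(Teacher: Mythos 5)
Your proposal is correct and follows essentially the same route as the paper's own proof: identify $E^2_{0q}$ with the coinvariants $H_q(G,A)_Q$, apply Corollary \ref{norma-iso-H} to pass to invariants via the norm map, and conclude with Theorem \ref{puntos-fijos-H}. Your remark that the spectral-sequence action agrees with the action (\ref{accion-sobre-c(g)}) used by Knudson is exactly the point settled in the chain-level discussion preceding the theorem, so nothing further is needed.
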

\begin{proof}
 We have that 
  $$E^2_{0q}=H_0(Q, H_q(G, A))=H_q(G, A)_Q.$$
  By Corollary \ref{norma-iso-H}, $H_q(G, A)_Q\cong H_q(G, A)^Q$, and by Theorem
  \ref{puntos-fijos-H}, $H_q(G, A)^Q\cong H_q^Q(G, A).$ 
 
 $\hfill\square$
\end{proof}

\textbf{Proof of Theorem 2}. Invertibility of $\Q$   in $A$ implies that $\Q$ is  invertible in $H_*(G, A)$ by
  Proposition \ref{inv-en-H}a. By Corollary III.10.2 in \cite{brown} it follows that $H_p(Q, H_q(G, A))=0$ for all $p>0$, thus
  $E^2_{pq}=H_p(Q, H_q(G, A))=0$ if $p\geq1$. This means that the spectral sequence collapses at page two.
 
 Since this sequence converges to $H_*(G\rtimes_\varphi Q, A)$ and according to  the previous theorem 
  $E^2_{0q}\cong H_q^Q(G, A)$, then $H_q^Q(G, A) \cong H_q(G\rtimes_\varphi Q, A).$ $\hfill\square$

\textbf{Example.}
We will compute the groups $H_*^Q(G, A)$ using the preceding Theorem for 
$Q=\Z/2=\langle t\rangle$, $G=\Z/n$ and $A=\Z/m$ with $m$ odd where 
$n\geq 2$, $t(g)=g^{-1}$,  $g\in G$, and the action of $G$ and $Q$
on $A$ is trivial. Under these conditions $G\rtimes_\varphi Q=D_{2n}$
and we just need to compute the homology groups of the dihedral groups.
We will split the computations into two cases depending on the parity of $n$.

Case 1. $n$ odd.

For this case we have that the cohomology groups $H^*(D_{2n}, \Z)$
are given by (\cite{Han93} Theorem 5.3):

$$H^q(D_{2n}, \Z)=\left\{
 \begin{array}{ll}
	\Z& q=0\\
	0 &q\equiv 1\,mod\,4\\
	\Z/2&q\equiv 2\,mod \,4\\
	0&q\equiv 3\, mod \,4\\
	\Z/n\oplus\Z/2& q\equiv 0\,mod\,4, \,\, q>0.
 \end{array}\right.
$$
Using the Universal Coefficient Theorem for cohomology we find that the homology
groups are
\begin{equation}\label{homologyd2nodd}
H_q(D_{2n}, \Z)=\left\{
 \begin{array}{ll}
	\Z& q=0\\
	\Z/2 &q\equiv 1\,mod\,4\\
	0&q\equiv 2\,mod \,4\\
	\Z/n\oplus\Z/2&q\equiv 3\, mod \,4\\
	0& q\equiv 0\,mod\,4, \,\, q>0
 \end{array}\right.
\end{equation}
and using again the Universal Coefficient Theorem now for homology we obtain 
the homology groups with coefficients in any abelian group $B$
$$H_q(D_{2n}, B)=\left\{
 \begin{array}{ll}
	B& q=0\\
	B/2B &q\equiv 1\,mod\,4\\
	T_2(B)&q\equiv 2\,mod \,4\\
	B/2nB& q\equiv 3\, mod \,4\\
	T_{2n}(B)& q\equiv 0\,mod\,4, \,\, q>0
 \end{array}\right.
$$
where $T_n(B)$ denotes the set of all $n$-torsion elements. Using this for 
$B=\Z/m=A$ where $m$ is odd we obtain
$$H_q(D_{2n}, A)=\left\{
 \begin{array}{ll}
	\Z/m& q=0\\
	0 &q\equiv 1\,mod\,4\\
	0&q\equiv 2\,mod \,4\\
	(\Z/m)/2n(\Z/m)=(\Z/m)/n(\Z/m)& q\equiv 3\, mod \,4\\
	T_{2n}(\Z/m)& q\equiv 0\,mod\,4, \,\, q>0.
 \end{array}\right.
$$
But $(\Z/m)/n(\Z/m)\cong \Z/(m,n)$ since
$(\Z/m)/n(\Z/m)\f \Z/(m,n)$ such that 
$[k]\mapsto \bar{k}$
and 
$\Z/(m,n)\f (\Z/m)/n(\Z/m)$
such that $\bar{k}\mapsto [k]$
are well defined group homomorphisms and they are inverses of each other.
On the other hand, it can be seen that 
$$\{\overline{k\tfrac{m}{(m,n)}}|k=0,1,...,(m,n)-1\}\subseteq T_n(A).$$
Moreover, if $\bar{r}\in T_n(A)$ then there exists $l\in \Z$ such that $nr=lm$,
which implies that $r=\frac{lm}{n}=\frac{lm}{s(m,n)}$ for some $s\in \Z$.
Since $(s, m)=1$ and  $r\in\Z$, $s|l$ and thus
$\bar{r}\in \{\overline{k\tfrac{m}{(m,n)}}|k=0,1,...,(m,n)-1\} $.
Then $$\{\overline{k\tfrac{m}{(m,n)}}|k=0,1,...,(m,n)-1\}\supseteq T_n(A).$$
This implies that 
$$T_n(A)=\{\overline{k\tfrac{m}{(m,n)}}|k=0,1,...,(m,n)-1\}\cong \Z/(m,n). $$
Summarizing  we have
$$H_q(D_{2n}, A)=\left\{
 \begin{array}{ll}
	\Z/m& q=0\\
	\Z/(m,n)& q\equiv 0 \,,3\, mod \,4\,,q>0 \\
	0&\mbox{otherwise}.
 \end{array}\right.
$$

Case 2. $n$ even

For this case we have that cohomology groups $H^*(D_{2n}, \Z)$ are given by
(\cite{Han93} Theorem 5.2):
$$H^q(D_{2n}, \Z)=\left\{
 \begin{array}{ll}
	\Z& q=0\\
	(\Z/2)^{\frac{q-1}{2}}&q\equiv 1\,mod\,4\\
	(\Z/2)^{\frac{q+2}{2}}&q\equiv 2\,mod \,4\\
	(\Z/2)^{\frac{q-1}{2}}& q\equiv 3\, mod \,4\\
	\Z/n\oplus(\Z/2)^{\frac{q}{2}}& q\equiv 0\,mod\,4, \,\, q>0.
 \end{array}\right.
$$
Using the Universal Coefficient Theorem for cohomology we find that the homology
groups $H_q(D_{2n}, \mathbb{Z})$ are:
$$H_q(D_{2n}, \Z)=\left\{
 \begin{array}{ll}
	\Z& q=0\\
	(\Z/2)^{\frac{q+3}{2}}&q\equiv 1\,mod\,4\\
	(\Z/2)^{\frac{q}{2}}&q\equiv 2\,mod \,4\\
	\Z/n\oplus (\Z/2)^{\frac{q+1}{2}}& q\equiv 3\, mod \,4\\
	(\Z/2)^{\frac{q}{2}}& q\equiv 0\,mod\,4, \,\, q>0.
 \end{array}\right.
$$
Using the Universal Coefficient Theorem for homology we have for any abelian group $B$:
$$H_q(D_{2n}, B)=\left\{
 \begin{array}{ll}
	B& q=0\\
	(B/2B)^{\frac{q+3}{2}}\oplus T_2(B)^\frac{q-1}{2} &q\equiv 1\,mod\,4\\
	(B/2B)^{\frac{q}{2}}\oplus T_2(B)^\frac{q+2}{2}&q\equiv 2\,mod \,4\\
	B/nB\oplus(B/2B)^{\frac{q+1}{2}}\oplus  T_2(B)^\frac{q-1}{2}& q\equiv 3\, mod \,4\\
	(B/2B)^{\frac{q}{2}}\oplus T_n(B)\oplus T_2(B)^\frac{q}{2}& q\equiv 0\,mod\,4, \,\, q>0.
 \end{array}\right.
$$
In our case $B=\Z/m=A$ with odd $m$ and we have
$$H_q(D_{2n}, A)=\left\{
 \begin{array}{ll}
	\Z/m& q=0\\
	0 &q\equiv 1\,mod\,4\\
	0&q\equiv 2\,mod \,4\\
	(\Z/m)/n(\Z/m)& q\equiv 3\, mod \,4\\
	T_n(\Z/m)& q\equiv 0\,mod\,4, \,\, q>0.
 \end{array}\right.
$$
Using the calculations made in the previous case we have
$$H_q(D_{2n}, A)=\left\{
 \begin{array}{ll}
	\Z/m& q=0\\
	\Z/(m,n)& q\equiv 0 \,,3\, mod \,4\,,q>0 \\
	0&\mbox{otherwise}.
 \end{array}\right.
$$

Summarizing both cases, for $Q=\Z/2$, $G=\Z/n$, $A=\Z/m$ where $m$ 
is odd, $t(g)=g^{-1}$ for every $g\in G$ and $A$ has trival $G$- and $Q$-actions, we have that the homology groups of invariant group chains are:
$$H^Q_q(G, A)=\left\{
 \begin{array}{ll}
	\Z/m& q=0\\
	\Z/(m,n)& q\equiv 0 \,,3\, mod \,4\,,q>0 \\
	0&\mbox{otherwise}.
 \end{array}\right.
$$
We can compare this result with Theorem 6 in \cite{JLM}.

\begin{remark}
 In general, homology of invariant group chains does not agree with the group homology of the semidirect product.
 For instance, consider $Q=\Z/2=\langle t \rangle$ acting on $G=\Z/n$ with odd $n$ by $t(g)=g^{-1},$ $ g\in G$, and $A=\Z$ 
 with trivial actions of $G$ and $Q$. 
 On one hand we have (\cite{Knudson} Example 5.1)
 $$H_p^{\Z/2}(\Z/n, \Z)=\left\{
 \begin{array}{ll}
	\Z& p=0\\
	\Z/n& p\equiv 3\, mod \,4 \\
	0&\mbox{otherwise}.
 \end{array}\right.
$$
On the other hand, the computations made in equation \ref{homologyd2nodd} show us that $H_*^{\Z/2}(\Z/n, \Z)\neq H_*(\Z/n\rtimes \Z/2, \Z)$.
\end{remark}

\section{The first homology group of invariant group chains}


In this section we discuss the relations between Knudson first homology group and two types of abelianization, the first one is and adhoc construction made to coincide with this group, and the second is a natural construction given in terms of the semidirect product of groups and suitable commutants.

\subsection{Orbit group}
Let $Q$ and $G$ be groups, and $Q\times G \to G $ an action of $Q$ on $G$ by 
group automorphisms, i.e.
$$q(g_1g_2)= q(g_1)q(g_2),\qquad q(1)= 1$$
 for every $ q\in Q, g_1,g_2 \in G$.
 
Now, we define a new orbit set. For an element, $g\in G$ consider its 
$Q$-\textit{ordered orbit}:
\begin{align}
(q(g))_{q\in Q}.
\end{align} 
These orbits form a set $\mathcal{O}(G, Q)\subset \prod_{q\in Q}(G)_{q}$, 
\begin{align}
\mathcal{O}(G, Q)&=\left\{(q(g))_{q\in Q}|\; g\in G\right\}. 
\end{align} 
 
The group $Q$ acts on $\mathcal{O}(G, Q)$ by the rule
$$Q\times \mathcal{O}(G, Q)\to \mathcal{O}(G, Q);$$
$$q_{1}\cdot(q(g))_{q\in Q}=(q(q^{-1}_{1}(g)))_{q\in Q}.$$
The resulting orbit set $\mathcal{O}'(G, Q)=\mathcal{O}(G, Q)/Q$ 
is in one-to-one correspondence with the traditional orbit set  $G/Q$, such 
correspondence is induced by the map
$$
\mathcal{O}(G, Q)\longrightarrow G/Q;
$$
$$
(q(g))_{q\in Q}\mapsto \{q(g)\,: \,q\in Q\}.
$$ 
The coordinates of every element in $\mathcal{O}'(G, Q)$ depends 
only on the elements  $q(g)\in G$ for a set of representatives of the classes
$[q]\in Q/Q_{g}$, where $Q_{g}$ denotes the isotropy group of the element $g\in G$.
So, one can denote them as
$$(q(g))_{[q]\in Q/Q_{g}}\in\mathcal{O}'(G, Q)$$
and say that the classes are \textit{ordered up to the action of the group} $Q$. 

Consider the product rule
$$\mathcal{O}(G, Q)\times \mathcal{O}(G, Q) \longrightarrow \mathcal{O}(G, Q)$$
defined by multiplication coordinate by coordinate, i.e.
\begin{align}
(q(g_{1}))_{q\in Q}(q(g_{2}))_{q\in Q} =(q(g_{1})q(g_{2}))_{q\in Q}.
\end{align} This product is well defined, associative, it has the neutral element
\begin{align}
(q(1))_{q\in Q}=(1)_{q\in Q}
\end{align}and the inverse of the element $(q(g))_{q\in Q}\in \mathcal{O}(G, Q)$ 
is the element
\begin{align}
(q(g^{-1}))_{q\in Q}=(q(g)^{-1})_{q\in Q}.
\end{align}Indeed, 
\begin{align*}
(q(g))_{q\in Q}(q(g)^{-1})_{q\in Q}=&(q(g)q(g)^{-1})_{q\in Q}=(1)_{q\in Q}.
\end{align*}

In fact, the group $\mathcal{O}(G, Q)$ is isomorphic to the group $G$. This new expression of the same object is a way to take into account the action of the group $Q$ and to have a well-defined product of $Q$-orbits as a whole.

If all isotropy groups of elements $g\in G$ are finite, then the coordinates of an element 
$(q(g))_{q\in Q}\in \mathcal{O}(G, Q)$  repeat as many times as the order of any isotropy group $Q_{q(g)}$. In the case the whole group $Q$ is finite, one may define the map
\begin{equation}\label{orbitmap}
\mathcal{O}(G, Q) \longrightarrow C_1(G); (q(g))_{q\in Q} \mapsto \sum_{q\in Q}q(g)=|Q_{g}|\sum_{[q]\in Q/Q_{g}}q(g)
\end{equation} 
to the free abelian group generated by $G$ as a set. The image of this map lies in the fixed point subgroup $C_1(G)^Q$ but does not generate the whole group, which is generated by the elements of the form $\sum_{[q]\in Q/Q_{g}}q(g)$.

One may also define the map 
$$\mathcal{O}'(G, Q) \longrightarrow C_1(G); (q(g))_{[q]\in Q/Q_{g}} \mapsto \sum_{[q]\in Q/Q_{g}}q(g).$$ Then one obtains a map $\mathcal{O}(G, Q) \longrightarrow C_1(G)^Q$ as the composition 
$$\mathcal{O}(G, Q) \longrightarrow \mathcal{O}'(G, Q)\longrightarrow C_1(G)^Q$$
whose image generates $C_1(G)^Q$. However, the set $\mathcal{O}'(G, Q)$ does not have a group structure that could make this into a group homomorphism.

One may ask instead for a minimal relation subgroup $R(G,Q)$ in $C_1(G)^Q$ making the map (\ref{orbitmap}) into a group homomorphism, i.e. such that the composition
$$\mathcal{O}(G, Q) \longrightarrow C_1(G)^Q \longrightarrow C_1(G)^Q/R(G,Q)$$
is a homomorphism.

For this, note that the map (\ref{orbitmap}) sends the product $g_1g_2$ to 
$$\mid Q_{g_1g_2}\mid\sum_{[q]\in Q/Q_{g_1g_2}}q[g_1g_2].$$
So, the element  
\begin{equation}\label{isotropyrelation}
\mid Q_{g_1g_2}\mid\sum_{[q]\in Q/Q_{g_1g_2}}q[g_1g_2]-\mid Q_{g_1}\mid\sum_{[q]\in Q/Q_{g_1}}q[g_1]-
\mid Q_{g_2}\mid\sum_{[q]\in Q/Q_{g_2}}q[g_2]
\end{equation}
must belong to $R(G,Q)$. However, there are different pairs of elements $g_1$ and $g_2$ (having different orbits) that may give the same product $g_3=g_1g_2$. Therefore, the relation (\ref{isotropyrelation}) might not be minimal. Note that the integers $\mid Q_{g_1g_2}\mid, \mid Q_{g_1}\mid, \mid Q_{g_2}\mid$ have a common divisor, namely $|Q_{g_1}\cap Q_{g_2}|$. Therefore, the relation (\ref{isotropyrelation}) 
is generated by 
\begin{equation}\label{minimalisotropyrelation}
\begin{array}{l}
\frac{\mid Q_{g_1g_2}\mid}{\mid Q_{g_1}\cap Q_{g_2}\mid}\sum_{[q]\in Q/Q_{g_1g_2}}q[g_1g_2]-\\
-\frac{\mid Q_{g_1}\mid}{\mid Q_{g_1}\cap Q_{g_2}\mid}\sum_{[q]\in Q/Q_{g_1}}q[g_1]-
\frac{\mid Q_{g_2}\mid}{\mid Q_{g_1}\cap Q_{g_2}\mid}\sum_{[q]\in Q/Q_{g_2}}q[g_2].
\end{array}
\end{equation}This justifies the following definition.

\begin{definition}\label{weighedabelianization}
The weighed orbit abelianization $\mathcal{O}(G, Q)_{\text{wab}}$ of the action of a (finite) group $Q$ on a group $G$ by group automorphisms is the biggest abelian group generated by the $Q$-orbits $\sum_{q\in Q}q(g), g\in G$, such that the map 
\begin{align}\label{repeatedabelianprojection}
\mathcal{O}(G, Q) \longrightarrow \mathcal{O}(G, Q)_{\text{wab}};\;(q(g))_{q\in Q}\mapsto \sum_{q\in Q}q(g)
\end{align}
is a group homomorphism.
\end{definition}
The resulting group, which is by definition a quotient of $C_1(G)^Q $ is called weighed because of the relations given in terms of the orders of isotropy groups $Q_{g}$.

By construction, one has the following.

\begin{theorem}\label{firsthomology}
Let $Q$ be a finite group, $G$ be a group, and $Q\times G \to G $ be an action of $Q$ on $G$ by 
group automorphisms. Denote by $H_{1}^{Q}(G,\mathbb{Z})$ the first homology group of $Q$-invariant chains on the group $G$ (see \cite{Knudson}).
Then, there is an isomorphism
\begin{align}\label{1computation}
H_{1}^{Q}(G,\mathbb{Z})\cong \mathcal{O}(G, Q)_{\text{wab}}.
\end{align}
\end{theorem}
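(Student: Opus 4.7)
The plan is to present both $H_{1}^{Q}(G,\mathbb{Z})$ and $\mathcal{O}(G,Q)_{\text{wab}}$ as quotients of $C_{1}(G)^{Q}$ by the same subgroup of relations, and then to conclude by matching their presentations. Since the coefficient action is trivial, the bar differential $\partial: C_{1}(G)\to C_{0}(G)=\mathbb{Z}$ is zero, so $H_{1}^{Q}(G,\mathbb{Z})=C_{1}(G)^{Q}/\partial(C_{2}(G)^{Q})$; the task reduces to identifying $\partial(C_{2}(G)^{Q})$ with the relation subgroup defining $\mathcal{O}(G,Q)_{\text{wab}}$.

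First I would describe the free bases. Because $Q$ permutes the $n$-tuple basis of $C_{n}(G)$, each $C_{n}(G)^{Q}$ is free abelian on the $Q$-orbit sums of basis tuples. In degree one these are $\sigma_{g}:=\sum_{[q]\in Q/Q_{g}}[qg]$, already appearing in (\ref{orbitmap}) and indexed by $G/Q$. In degree two they are $\sigma_{g_{1},g_{2}}:=\sum_{[q]\in Q/(Q_{g_{1}}\cap Q_{g_{2}})}[qg_{1}|qg_{2}]$, whose stabilizer is the intersection $Q_{g_{1}}\cap Q_{g_{2}}$ (not $Q_{g_{1}g_{2}}$, which in general is strictly larger).

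Next I would apply the bar differential $\partial[g_{1}|g_{2}]=[g_{2}]-[g_{1}g_{2}]+[g_{1}]$ to $\sigma_{g_{1},g_{2}}$ and reorganize each of the three resulting sums over $Q/(Q_{g_{1}}\cap Q_{g_{2}})$ into a multiple of the corresponding orbit sum $\sigma_{g_{1}}$, $\sigma_{g_{1}g_{2}}$, $\sigma_{g_{2}}$. The orbit-stabilizer principle gives the multiplicity $|Q_{g_{i}}|/|Q_{g_{1}}\cap Q_{g_{2}}|$ (respectively $|Q_{g_{1}g_{2}}|/|Q_{g_{1}}\cap Q_{g_{2}}|$), producing
\[
\partial(\sigma_{g_{1},g_{2}})=\tfrac{|Q_{g_{1}}|}{|Q_{g_{1}}\cap Q_{g_{2}}|}\sigma_{g_{1}}-\tfrac{|Q_{g_{1}g_{2}}|}{|Q_{g_{1}}\cap Q_{g_{2}}|}\sigma_{g_{1}g_{2}}+\tfrac{|Q_{g_{2}}|}{|Q_{g_{1}}\cap Q_{g_{2}}|}\sigma_{g_{2}},
\]
which is, up to a sign, exactly the minimal isotropy relation (\ref{minimalisotropyrelation}). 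Letting $(g_{1},g_{2})$ range over $G\times G$ shows that $\partial(C_{2}(G)^{Q})$ coincides with the subgroup of $C_{1}(G)^{Q}$ generated by all relations (\ref{minimalisotropyrelation}); by the discussion preceding Definition~\ref{weighedabelianization} this is precisely the subgroup defining $\mathcal{O}(G,Q)_{\text{wab}}$, and the identity on $C_{1}(G)^{Q}$ descends to the isomorphism (\ref{1computation}).

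The main obstacle I expect is the coefficient bookkeeping just described: one must carefully separate the stabilizer $Q_{g_{1}}\cap Q_{g_{2}}$ of the pair from the (possibly larger) stabilizers of the individual factors and of the product $g_{1}g_{2}$, and track the corresponding index ratios so that the boundary relation produced by the bar complex exactly reproduces the weight coefficients appearing in (\ref{minimalisotropyrelation}). Once this matching is in place, identifying the two quotients is tautological, and one can rephrase the argument more formally by checking that $H_{1}^{Q}(G,\mathbb{Z})$ satisfies the universal property characterizing $\mathcal{O}(G,Q)_{\text{wab}}$.
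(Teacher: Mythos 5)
Your argument is correct, but it is more self-contained than the paper's. The paper's proof of Theorem \ref{firsthomology} is essentially a citation: it quotes Theorem 2 of \cite{JLM}, which presents $H_{1}^{Q}(G,\mathbb{Z})$ as the quotient of the free abelian group on the orbit sums $\sigma_{g}=\sum_{[q]\in Q/Q_{g}}q[g]$ by exactly the relations (\ref{minimalisotropyrelation}), after which the isomorphism with $\mathcal{O}(G,Q)_{\text{wab}}$ is immediate from Definition \ref{weighedabelianization}. What you do instead is re-derive that presentation at the chain level: $C_{1}(G)^{Q}$ and $C_{2}(G)^{Q}$ are free on orbit sums of basis tuples (with the stabilizer of a pair being $Q_{g_{1}}\cap Q_{g_{2}}$, not $Q_{g_{1}g_{2}}$), $\partial_{1}$ vanishes for trivial coefficients, and $\partial_{2}$ applied to a degree-two orbit sum gives the weighted relation with coefficients $|Q_{g_{i}}|/|Q_{g_{1}}\cap Q_{g_{2}}|$ and $|Q_{g_{1}g_{2}}|/|Q_{g_{1}}\cap Q_{g_{2}}|$, which is precisely the content of the cited result. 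Your bookkeeping is right: the fibers of $Q/(Q_{g_{1}}\cap Q_{g_{2}})\to Q\cdot(g_{1}g_{2})$ all have size $|Q_{g_{1}g_{2}}|/|Q_{g_{1}}\cap Q_{g_{2}}|$ because $Q_{g_{1}}\cap Q_{g_{2}}\subseteq Q_{g_{1}g_{2}}$, and degenerate pairs (some $g_{i}=1$) only reproduce instances of (\ref{minimalisotropyrelation}), so nothing is lost. The one caution is on the target side: Definition \ref{weighedabelianization} literally speaks of the group generated by the norms $\sum_{q\in Q}q(g)=|Q_{g}|\sigma_{g}$, whereas the reading actually used (and stated in the sentence after the definition) is that $\mathcal{O}(G,Q)_{\text{wab}}$ is the quotient of $C_{1}(G)^{Q}$, free on the $\sigma_{g}$, by the subgroup generated by (\ref{minimalisotropyrelation}); your identification relies on this reading, which is what makes the final matching of presentations tautological. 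In short, you prove what the paper cites: the paper's version buys brevity, yours buys independence from \cite{JLM}.
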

\proof\,

It was shown in \cite[Theorem 2]{JLM} the following.

\begin{eqnarray*}
&&H^{Q}_{1}(G,\mathbb{Z})=\\
&&\frac{\mathbb{Z}\left\lbrace
\sum_{\bar{q}\in Q/Q_{[g]}}q[g]\,\vline\, g\in G  \right\rbrace}{\mathbb{Z}\left\lbrace
a\sum_{\bar{q}\in Q/Q_{[g_{2}]}}q[g_{2}]-b\sum_{\bar{q}\in Q/Q_{[g_{1}g_{2}]}}q[g_{1}g_{2}]+c\sum_{\bar{q}\in Q/Q_{[g_{1}]}}q[g_{1}]\,\vline\, g_{1},g_{2}\in G  \right\rbrace},
\end{eqnarray*}
where $a=\frac{\mid Q_{[g_{2}]}\mid}{\mid Q_{[g_{1}]}\cap Q_{[g_{2}]}\mid}$, $b=\frac{\mid Q_{[g_{1}g_{2}]}\mid}{\mid Q_{[g_{1}]}\cap Q_{[g_{2}]}\mid}$ and $c=\frac{\mid Q_{[g_{1}]}\mid}{\mid Q_{[g_{1}]}\cap Q_{[g_{2}]}\mid}$.

$\hfill\square$

\subsection{Relations between $G$ and $H^{Q}_{1}(G,\mathbb{Z})$}

In this section we compare the group $H^{Q}_{1}(G,\mathbb{Z})$ with a 
construction of an abelian group made of orbits of the action of $Q$ on $G$. This construction is a natural extension of the usual notion of abelianization of a group, which is made of orbits of the group $G$ acting on itself by group conjugation.

The map $G\to C_1(G)^Q$ sending each element $g\in G$ to the corresponding generator $\sum_{[q]\in Q/Q_{g}}q[g]$ does not induce a homomorphism between the groups $G$ and $H^{Q}_{1}(G,\mathbb{Z})$. Instead, the (not necessarily surjective) norm map $N:G\to C_1(G)$ given by sending each element $g\in G$ to the sum of the elements on its orbit  $\sum_{[q]\in Q}q[g]=\mid Q_{g}\mid\sum_{[q]\in Q/Q_{g}}q[g]$ does induce a homomorphism, because the product
$g_1g_2$ is sent to 
$$\mid Q_{g_1g_2}\mid\sum_{[q]\in Q/Q_{g_1g_2}}q[g_1g_2]=$$
$$=\mid Q_{g_1}\mid\sum_{[q]\in Q/Q_{g_1}}q[g_1]+
\mid Q_{g_2}\mid\sum_{[q]\in Q/Q_{g_2}}q[g_2].$$
However, this map sends each of the elements $q(g), q\in Q$ of the orbit of $g$ to the same element in $H^{Q}_{1}(G,\mathbb{Z})$. So, one might seek for a group defined in terms of $G$, $Q$ and the action $Q\times G \to G$ identifying all the elements in the same orbit and being abelian.

A version of this in the case of inner automorphisms is the well known abelianization $G_{\text{ab}}$ of the group $G$. This is because this group is generated by conjugacy classes of its elements.   

In the semidirect product $G \rtimes Q$ induced by the action $\phi: Q\times G \to G$ of $Q$ on $G$, consider the commutator subgroup $[G,Q]$. The generators of this subgroup have the form
$[g,q]=g(qg^{-1}q^{-1})=g \phi(q,g^{-1})=g q(g^{-1})$. Denote by $[G,Q]^{G \rtimes Q}$ the normal closure of  $[G,Q]$ in $G \rtimes Q$. Then, in the quotient 
$$
G \rtimes Q/[G,Q]^{G \rtimes Q}
$$one has that $\overline g = \overline{q(g)}$ for every $q\in Q, g\in G$. 

\begin{definition}\label{orbitgroup}
The orbit group of the action of the group $Q$ on $G$, denoted by $G//Q$,
is the image of $G$ in  $(G \rtimes Q)/[G,Q]^{G \rtimes Q}$ under the composition 
\begin{equation}\label{orbitinclusion}
G\hookrightarrow G \rtimes Q \longrightarrow (G \rtimes Q)/[G,Q]^{G \rtimes Q}.
\end{equation}
\end{definition}
Denote by $p:G \to G//Q$ the map (\ref{orbitinclusion}) onto its image.

This group has the following universal property: if $\varphi: G \to H$ is a group homomorphism such that $\varphi (q(g))=\varphi (g)$ for any $q\in Q$ and $g\in G$, then there is a unique homomorphism
$\psi: G//Q \to H$ such that the following diagram is commutative:
$$
\xymatrix{G  \ar[dr]^-{\varphi} \ar[d]_-{p} & \\ 
                G//Q \ar[r]^-{\psi} &    H,} 
$$
i.e.
$\psi \circ p = \varphi$. 

This means that there is an induced map $G//Q \to H^{Q}_{1}(G,\mathbb{Z})$ 
commuting with the map
$\overline N: G \to H^{Q}_{1}(G,\mathbb{Z})$. As $H^{Q}_{1}(G,\mathbb{Z})$ is an abelian group, this factors through a homomorphism $(G//Q)_{\text{ab}} \to H^{Q}_{1}(G,\mathbb{Z})$ sending the element $\bar g\in (G//Q)_{\text{ab}} $ to the element $\mid Q_g\mid [g]^Q\in H^{Q}_{1}(G,\mathbb{Z})$.


By the previous discussion, we have the following.
\begin{theorem}
There is a homomorphism 
\begin{equation}
(G//Q)_{\text{ab}}\longrightarrow H^{Q}_{1}(G,\mathbb{Z})
\end{equation}
from the orbit group abelianization to the first homology group of invariant group chains commuting with the norm map, i.e. such that the diagram
$$\xymatrix{G  \ar[ddr]^-{\bar N} \ar[d]_-{p} & \\ 
                G//Q \ar[d] &   \\
                 (G//Q)_{\text{ab}} \ar[r] & H^{Q}_{1}(G,\mathbb{Z} )} ,$$
is commutative.
\end{theorem}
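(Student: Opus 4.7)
The plan is to obtain the required homomorphism as a factorization of the norm map $\bar N\colon G \to H^{Q}_{1}(G,\mathbb{Z})$ discussed just before the theorem, by successively applying the universal property of $G//Q$ and then the universal property of abelianization.

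First I would verify that $\bar N$, sending $g$ to $|Q_g|\sum_{[q]\in Q/Q_g} q[g]=\sum_{q\in Q}q[g]$, is a group homomorphism into $H^{Q}_{1}(G,\mathbb{Z})$. Its image lies in $C_1(G)^Q$ by construction, so landing in the correct group is automatic. The desired identity $\bar N(g_1 g_2)=\bar N(g_1)+\bar N(g_2)$ amounts to saying that the expression (\ref{isotropyrelation}) vanishes in $H^{Q}_{1}(G,\mathbb{Z})$, and Theorem \ref{firsthomology} tells us that the denominator of $H^{Q}_{1}(G,\mathbb{Z})$ is generated by the minimal relation (\ref{minimalisotropyrelation}). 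Since (\ref{isotropyrelation}) equals $|Q_{g_1}\cap Q_{g_2}|$ times (\ref{minimalisotropyrelation}), it vanishes in the quotient and $\bar N$ is a homomorphism.

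Next I would check that $\bar N$ is constant on $Q$-orbits: for every $q_0\in Q$ and $g\in G$ one has $\bar N(q_0(g))=\bar N(g)$, which is already an equality in $C_1(G)^Q$, obtained via the substitution $q\mapsto qq_0^{-1}$ in the sum $\sum_{q\in Q}q[q_0(g)]$. With these two observations in hand, the universal property of $G//Q$ stated right after Definition \ref{orbitgroup} produces a unique homomorphism $\psi\colon G//Q \to H^{Q}_{1}(G,\mathbb{Z})$ with $\psi\circ p=\bar N$. Because $H^{Q}_{1}(G,\mathbb{Z})$ is abelian, $\psi$ factors uniquely through $(G//Q)_{\text{ab}}$, yielding the advertised homomorphism, and the commutativity of the triangle in the statement is built into this construction.

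The only non-formal ingredient is the check that $\bar N$ descends to a homomorphism; the remainder is just successive invocation of universal properties. I expect this first step to be the main (and really the only) obstacle, but since Theorem \ref{firsthomology} records a precise presentation of $H^{Q}_{1}(G,\mathbb{Z})$, the verification collapses to the arithmetic observation that the evident relation (\ref{isotropyrelation}) is an integer multiple of the minimal relation (\ref{minimalisotropyrelation}).
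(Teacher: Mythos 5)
Your proposal is correct and follows essentially the same route as the paper: the paper's "proof" is precisely the discussion preceding the theorem, namely that the norm map $\bar N$ is a homomorphism into $H^{Q}_{1}(G,\mathbb{Z})$ (because the relation (\ref{isotropyrelation}) is an integer multiple of the minimal relation (\ref{minimalisotropyrelation}) appearing in the presentation of Theorem \ref{firsthomology}), is constant on $Q$-orbits, and hence factors through $G//Q$ by its universal property and then through $(G//Q)_{\text{ab}}$ since the target is abelian. Your write-up makes these steps slightly more explicit than the paper does, but the argument is the same.
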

$\hfill\square$

As it was discussed also, this homomorphism is not in general surjective, because it is induced by the (non surjective) norm map. It might not be injective also: using the relation (\ref{minimalisotropyrelation}), it is easy to show that the order $n$ of an element $g\in G$ annihilates the corresponding element $[g]^Q\in  H^{Q}_{1}(G,\mathbb{Z} )$. Therefore, if $n$ divides  $\mid Q_g\mid$, then the image of such element would be zero.

\section{Invariant cohomology, invariant group extensions with abelian kernel and free actions}

In this section we propose a new definition of invariant cohomology that generalizes the usual cohomology of a group, this cohomology is an invariant of the $Q$-group $G$ that provides algebraic information. We also define their corresponding homology that turns out to be a generalization of the homology defined in \cite{Knudson}. For this, we introduce the category $Q$-$G$ $\mathcal{M}od$ that turns out to be equivalent to the category of modules over $\Z(G\rtimes Q)$.

\begin{definition}
 Let $G$ be a $Q$-group. Let $M$ be, simultaneously, a $G$-module and a $Q$-module. 
 We say that $M$ is a $Q$-$G$ module if 
  \begin{align}
  q(gm)=q(g)qm
  \end{align} for $g\in G, q\in Q, m\in M$. We denote by $Q$-$G\,\mathcal{M}od$ the category whose objects are $Q$-$G$ modules and morphisms are the functions $f:M\rightarrow N$ such that $f$ is both $G$-linear and $Q$-linear.
 \end{definition}
 
\begin{proposition}\label{equivalent}
The category $Q$-$G\,\mathcal{M}od$ is equivalent to the category of modules over $\mathbb{Z}(G\rtimes Q)$.
\end{proposition}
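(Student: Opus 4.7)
The plan is to exhibit an explicit pair of mutually inverse functors between $Q$-$G\,\mathcal{M}od$ and the category of $\mathbb{Z}(G\rtimes Q)$-modules, both of which act as the identity on underlying abelian groups and on $\mathbb{Z}$-linear maps. The content of the equivalence is then the observation that the semidirect product relation in $G\rtimes Q$ encodes precisely the compatibility condition $q(gm)=q(g)\,qm$.

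First I would define a functor $F\colon Q\text{-}G\,\mathcal{M}od \to \mathbb{Z}(G\rtimes Q)\text{-}\mathcal{M}od$ by taking a $Q$-$G$ module $M$ to the same abelian group equipped with the action
\[
(g,q)\cdot m := g(qm),\qquad (g,q)\in G\rtimes Q,\ m\in M,
\]
and by sending a morphism to itself. To verify that this is a well-defined module action one unwinds the semidirect product multiplication $(g_1,q_1)(g_2,q_2) = (g_1\,q_1(g_2),\,q_1q_2)$; the associativity
\[
((g_1,q_1)(g_2,q_2))\cdot m = g_1 \, q_1(g_2)\, q_1(q_2 m) = (g_1,q_1)\cdot\bigl((g_2,q_2)\cdot m\bigr)
\]
follows by expanding the right-hand side as $g_1\, q_1(g_2\,q_2 m)$ and then applying the compatibility axiom. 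A $Q$-$G$-linear morphism is automatically $(G\rtimes Q)$-linear since $(g,q)$ acts as the composite of the $Q$- and $G$-actions.

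Conversely I would define $H\colon \mathbb{Z}(G\rtimes Q)\text{-}\mathcal{M}od \to Q\text{-}G\,\mathcal{M}od$ by restricting the action of $G\rtimes Q$ along the inclusions $G\hookrightarrow G\rtimes Q$, $g\mapsto(g,e)$, and $Q\hookrightarrow G\rtimes Q$, $q\mapsto(e,q)$. The compatibility $q(gm)=q(g)(qm)$ is then forced by the identity
\[
(e,q)(g,e) = (q(g),q) = (q(g),e)(e,q)
\]
in $G\rtimes Q$, applied to $m\in M$; morphisms are again preserved since $G$-linearity and $Q$-linearity follow by restricting $(G\rtimes Q)$-linearity.

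Finally I would check that $F$ and $H$ are mutually inverse. On underlying abelian groups both functors are the identity, and on morphisms the composites are again the identity, so it suffices to verify that the two constructions on actions invert each other. Writing any element of $G\rtimes Q$ as $(g,q)=(g,e)(e,q)$, the action recovered from $HF(M)$ coincides with the original $Q$- and $G$-actions on $M$, while the action $(g,q)\cdot m = g(qm)$ produced by $FH$ from a $(G\rtimes Q)$-module equals the original action because $(g,q)=(g,e)(e,q)$. There is no real obstacle here; the main point is the bookkeeping identification of the compatibility axiom with the defining relation of the semidirect product, and the proof is essentially this remark.
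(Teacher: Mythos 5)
Your proposal is correct and follows essentially the same route as the paper: the same pair of functors (extension of the two actions via $(g,q)\cdot m = g(qm)$ and restriction along the inclusions of $G$ and $Q$ into $G\rtimes Q$), with the verifications the paper leaves as ``easy to see'' written out. No issues.
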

\begin{proof}
Let $T$ be the functor $T:Q\mbox{-}G\,\mathcal{M}od\rightarrow G\rtimes Q\,\mathcal{M}od$, $M\mapsto M'$, $f\mapsto f'$ where $M$ is a $Q$-$G$ module, $f$ is a morphism of $Q$-$G$ modules and $M'$ and $f'$ are defined as follows: as a set $M'$ is defined as the underlying set of $M$ and the action of an element $(g,q)\in G\rtimes Q$ over $m\in M$ is given by $(g,q)m=g(qm)$ and $f'(m)=f(m)$. It is easy to see that $f'$ is a morphism in the category $G\rtimes Q\,\mathcal{M}od$. \\\\ Let $L$ be the functor $L:G\rtimes Q\,\mathcal{M}od\rightarrow Q\mbox{-}G\,\mathcal{M}od$, $M\mapsto \bar{M}$, $f\mapsto \bar{f}$ where $M$ is a $G\rtimes Q$ module and $f$ is a morphism of $G\rtimes Q$ modules. As a set $\bar{M}$ is defined as the underlying set of $M$, an element $q\in Q$ acts on $m\in M$ by the rule $qm=(1,q)m$ and $g\in G$ acts by $gm=(g,1)m$. The morphism $\bar{f}$ is defined by $\bar{f}(m)=f(m)$ for all $m\in M$. It is easy to see that $\bar{f}$ is a morphism in the category $Q\mbox{-}G\,\mathcal{M}od$. \\\\ Thus, $T$ defines an isomorphism of categories with inverse $L$.

 \end{proof}
$\hfill\square$
 

In this way, one says that a $Q$-$G$-module $M$ is a free $Q$-$G$-module, if and only if $T(M)$ is a free $G\rtimes Q$-module. The following proposition is a characterization of such $Q$-$G$ modules:
 
\begin{proposition} A  $Q$-$G$ module $M$ is free if and only if $M$ admits a $\mathbb{Z}G$-basis where $Q$ acts freely.\end{proposition}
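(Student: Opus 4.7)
The plan is to pass through the category equivalence of Proposition \ref{equivalent}: a $Q$-$G$-module $M$ is free exactly when $T(M)$ is free as a left $\mathbb{Z}(G\rtimes Q)$-module, so the proposition reduces to showing that $\mathbb{Z}(G\rtimes Q)$-freeness of $T(M)$ is equivalent to the existence of a $\mathbb{Z}G$-basis of $M$ preserved by and freely permuted by the $Q$-action $q\cdot m = (1,q)m$. The anchor of the argument is the canonical $\mathbb{Z}G$-basis $\{(1,q) : q \in Q\}$ of the group ring $R:=\mathbb{Z}(G\rtimes Q)$, which comes from the unique factorization $(g,q)=(g,1)(1,q)$ and on which $Q$ acts via $q\cdot(1,q')=(1,qq')$, i.e.\ by the regular (hence free) action of $Q$ on itself.

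For the ``only if'' direction, if $T(M)\cong \bigoplus_{i\in I} R$, I would simply take the disjoint union of copies of this canonical basis, one for each summand, to produce a $\mathbb{Z}G$-basis of $M$ on which $Q$ acts freely by permuting the copies of $Q$ inside each summand independently.

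For the converse, suppose $M$ has a $\mathbb{Z}G$-basis $\mathcal{B}$ preserved by and freely permuted by $Q$. Choose a set $\mathcal{I}\subset \mathcal{B}$ of representatives of the $Q$-orbits in $\mathcal{B}$, so that $(b,q)\mapsto q\cdot b$ is a bijection $\mathcal{I}\times Q \to \mathcal{B}$. I would then verify that $\mathcal{I}$ is an $R$-basis of $T(M)$. Spanning: every $b'\in\mathcal{B}$ has the form $q\cdot b=(1,q)b$ for a unique $(b,q)\in\mathcal{I}\times Q$, so $\mathcal{I}$ generates $\mathcal{B}$, and hence $M$, over $R$. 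Independence: given $\sum_{b\in\mathcal{I}} r_b\, b = 0$ with $r_b\in R$, write the unique decomposition $r_b=\sum_{q\in Q} a_{b,q}(1,q)$ with $a_{b,q}\in\mathbb{Z}G$ and use $(1,q)b = q\cdot b$ to rewrite the relation as
$$\sum_{b\in\mathcal{I}}\sum_{q\in Q} a_{b,q}\,(q\cdot b)=0.$$
By freeness of the $Q$-action on $\mathcal{B}$, the elements $q\cdot b$ are pairwise distinct elements of $\mathcal{B}$, so $\mathbb{Z}G$-independence of $\mathcal{B}$ forces every $a_{b,q}$ to vanish, hence every $r_b=0$.

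The only real subtlety lies in aligning the two decompositions that are both indexed by $Q$: on one hand the ring decomposition $R=\bigoplus_{q\in Q} \mathbb{Z}G\cdot(1,q)$, and on the other the orbit decomposition $\mathcal{B}=\bigsqcup_{b\in\mathcal{I}} Q\cdot b$. The $Q$-$G$-module axiom $q(gm)=q(g)\cdot qm$ is what guarantees compatibility with the ring relation $(1,q)(g,1)=(q(g),q)$ inside $R$, so that the formal manipulations above genuinely take place at the level of $R$-modules. Once this compatibility is checked, the rest is essentially bookkeeping.
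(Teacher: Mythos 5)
Your argument is correct and follows essentially the same route as the paper: both pass through the identification of free $Q$-$G$ modules with free $\mathbb{Z}(G\rtimes Q)$-modules and exploit the decomposition $\mathbb{Z}(G\rtimes Q)=\bigoplus_{q\in Q}\mathbb{Z}G\,(1,q)$, matching $Q$-orbits of the basis with copies of the group ring. The paper packages this as explicit direct-sum isomorphisms $\bigoplus_{q\in Q}(\mathbb{Z}G)_{qi}\cong \mathbb{Z}(G\rtimes Q)$ indexed by orbit representatives, whereas you check spanning and independence of the representatives directly, but the content is the same.
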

 
 \begin{proof} If $M$ admits a $\mathbb{Z}G$-basis where $Q$ acts freely, then one can write $M$ as the sum $M\cong \bigoplus_{i\in I}(\Z G)_i$ with $Q$ acting freely on the set of 
 indices $I$. For each $i\in I$, we have a $\Z(G\rtimes Q)$-isomorphism: 
 $$\bigoplus_{q\in Q}(\Z G)_{qi}\cong \Z(G\rtimes Q)$$ mapping the element $g \in (\Z G)_{qi}$ 
 to the element $(g,q)\in \Z(G\rtimes Q)$. 
 If $E$ is a set of representatives of the quotient $I/Q$ then one has $$M\cong \bigoplus_{j\in E}\bigoplus_{q\in Q}(\Z G)_{qj}\cong \bigoplus_{j\in E} \Z(G\rtimes Q)_j.$$ Conversely, if $M$ is a free $Q$-$G$ module, then $M$ is a free $\Z(G\rtimes Q)$-module, so $$M\cong \bigoplus_{i\in I}\Z(G\rtimes Q)_i\cong \bigoplus_{(q,i)\in Q\times I}(\Z G)_{(q,i)}$$ and the action of $Q$ on $Q\times I$ given by $q'(q,i)=(q'q,i)$ is free.  
\end{proof}

$\hfill\square$


Let $G$ be a $Q$-group and $M$ a $Q$-$G$ module. There are natural actions: $$Q\times Hom_{G}(B_{n}(G),M)\rightarrow Hom_{G}(B_{n}(G),M)$$ $$q\cdot f ([q_{1}\mid\dots\mid g_{n}])=qf([q^{-1}g_{1}\mid \cdots \mid q^{-1}g_{n}])$$ $$Q\times B_{n}(G)\otimes_{G}M\rightarrow B_{n}(G)\otimes_{G}M$$ $$q([q_{1}\mid\cdots\mid g_{n}]\otimes m)=[qg_{1}\mid\cdots\mid qg_{n}]\otimes qm$$ and the differential induced in $Hom_{G}(B(G),M)$ and $B(G)\otimes_{G}M$ by the bar resolution are $Q$-equivariant. We define the homology and cohomology of invariants as: 
\begin{align}
HH_{n}^{Q}(G,M)=&H_{n}((B(G)\otimes_{G}M)^{Q}) \\ 
HH^{n}_{Q}(G,M)=&H^{n}(Hom_{G}(B(G),M)^{Q})
\end{align} 

\begin{remark}
If $Q$ acts trivially on $G$ and on $M$, then $HH^{n}_{Q}(G,M)=H^{n}(G,M)$ and $HH_{n}^{Q}(G.M)=H_{n}(G,M)$. In this way, these invariants are an immediate generalization of the usual homology and cohomology of the group $G$.
\end{remark}

The following propositions show that under certain conditions, this cohomology coincides with other invariants. 

\begin{proposition}
If $Q$ is a finite group and $|Q|$ is invertible in $M$, then $HH^{n}_{Q}(G,M)\cong H^{n}(G,M)^{Q}$.
\end{proposition}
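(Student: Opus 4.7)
The plan is to show that the natural inclusion of cochain complexes
$$i: Hom_G(B(G), M)^Q \hookrightarrow Hom_G(B(G), M)$$
induces the desired isomorphism $HH^n_Q(G, M) \cong H^n(G, M)^Q$. The key tool is the averaging operator
$$\pi: Hom_G(B(G), M) \longrightarrow Hom_G(B(G), M)^Q, \qquad \pi(f) = \frac{1}{|Q|} \sum_{q \in Q} q \cdot f,$$
which is well-defined because $|Q|$ is invertible in $M$, and hence in each group $Hom_G(B_n(G), M)$.

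First I would verify the formal properties of $\pi$: that it lands in $Q$-invariants, that $\pi \circ i = \id$ on the invariant subcomplex, and that it commutes with the coboundary. Commutation with $d$ is immediate from the stated $Q$-equivariance of the differential on $Hom_G(B(G), M)$; that each $q \cdot f$ stays $G$-linear is where the $Q$-$G$-module compatibility $q(gm) = q(g)\,qm$ enters. Together these say that $i$ is a split injection of cochain complexes, so $H^n(i)$ is a split monomorphism whose image sits inside $H^n(G, M)^Q$ (since invariant cocycles certainly represent $Q$-fixed classes).

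Next I would prove that the image is exactly $H^n(G, M)^Q$. Given $[f] \in H^n(G, M)^Q$ represented by a cocycle $f$, each difference $q \cdot f - f$ is a coboundary by hypothesis, so
$$[\pi(f)] = \tfrac{1}{|Q|}\sum_{q \in Q} [q \cdot f] = [f],$$
while $\pi(f)$ is a $Q$-invariant cocycle by construction; this gives surjectivity. Injectivity follows formally from the splitting, but can also be checked directly: if $f \in Hom_G(B(G), M)^Q$ is a cocycle with $f = dg$ in the ambient complex, then $f = \pi(f) = d\pi(g)$ with $\pi(g)$ invariant, so $[f] = 0$ already in $HH^n_Q(G, M)$.

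I expect essentially no substantive obstacle here: this is the standard principle that taking $Q$-invariants is an exact functor on complexes of $\mathbb{Z}[Q]$-modules as soon as $|Q|$ acts invertibly. The only genuine checkpoints are (i) that $\pi$ preserves $G$-linearity, so that it really takes values in the $Hom_G$ groups, and (ii) that the $Q$-action induced on the cohomology $H^n(G, M) = H^n(Hom_G(B(G), M))$ is the one inherited from the chain-level action used to define $\pi$, which is a matter of functoriality.
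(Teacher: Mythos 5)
Your proposal is correct and follows essentially the same route as the paper: the inclusion of the invariant subcomplex induces the map to $H^n(G,M)^Q$, and the averaging operator $f\mapsto \frac{1}{|Q|}\sum_{q\in Q}q\cdot f$ provides the inverse. The paper states this more tersely, while you spell out the verifications (preservation of $G$-linearity, compatibility with the differential, and the cocycle-level check of surjectivity and injectivity), but the underlying argument is identical.
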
 

\begin{proof}
The action of $Q$ on $Hom_{G}(B(G),M)$ induces a well-defined action of $Q$ on $H^{n}(G,M)$ $$Q\times H^{n}(G,M)\rightarrow H^{n}(G,M),\,\,\,(q,\overline f)\mapsto \overline{q\cdot f}$$ The natural inclusion $i:Hom_{G}(B_{n}(G),M)^{Q}\rightarrow Hom_{G}(B_{n}(G),M)$ induces a homomorphism: $$HH^{n}_{Q}(G,M)\rightarrow H^{n}(G,M)^{Q}$$ with inverse given by: $$H^{n}(G,M)^{Q}\rightarrow HH^{n}_{Q}(G,M),\,\,\,\overline{f}\mapsto \overline{\frac{1}{|Q|}\sum_{q\in Q}q\cdot f}$$
\end{proof}

$\hfill\square$

\begin{proposition}
If $Q$ is a finite group, $M$ is a trivial $Q$-$G$ module and $|Q|$ is invertible in $M$, then $ HH^{n}_{Q}(G,M)\cong H^{n}_{Q}(G,M)$.
\end{proposition}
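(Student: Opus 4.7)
The plan is to compare the two cochain complexes directly, exploiting the fact that $M$ is a trivial $G$-module to reduce both to functions on orbits. Since $Q$ acts trivially on $M$ and $G$ acts trivially on $M$, there is a natural identification
\[
Hom_G(B_n(G),M) \cong Hom(C_n(G),M),
\]
where $C_n(G) = B_n(G)\otimes_G \Z$ is the usual bar chain complex; moreover, under this identification the prescribed $Q$-action becomes $(q\cdot f)(c) = f(q^{-1}c)$ because $Q$ acts trivially on $M$. Hence $HH^n_Q(G,M) = H^n(Hom(C(G),M)^Q)$, while by definition $H^n_Q(G,M) = H^n(Hom(C(G)^Q,M))$. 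The inclusion $C(G)^Q \hookrightarrow C(G)$ induces a restriction map of cochain complexes
\[
\mathrm{res}\colon Hom(C(G),M)^Q \longrightarrow Hom(C(G)^Q,M),
\]
which automatically commutes with differentials since it is dual to the inclusion of a subcomplex.

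Next, I would show $\mathrm{res}$ is an isomorphism in every degree. Choose a set $R$ of representatives for the orbits of the $Q$-action on the free generating set $G^n$ of $C_n(G)$. Writing $Q_c$ for the isotropy subgroup of $c$, one obtains a decomposition $C_n(G) = \bigoplus_{c\in R}\Z[Q/Q_c]$, and consequently $C_n(G)^Q = \bigoplus_{c\in R}\Z\cdot N_c$, where $N_c = \sum_{[q]\in Q/Q_c} qc$. Both sides of $\mathrm{res}$ identify with $\prod_{c\in R} M$: for the left-hand side, a $Q$-invariant homomorphism $f$ is determined by $(f(c))_{c\in R}$; for the right-hand side, a homomorphism $\tilde f$ is determined by $(\tilde f(N_c))_{c\in R}$. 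Computing, for $Q$-invariant $f$,
\[
f(N_c) = \sum_{[q]\in Q/Q_c} f(qc) = \frac{|Q|}{|Q_c|}\, f(c),
\]
so under these identifications $\mathrm{res}$ is simply the map $(x_c)_{c\in R}\mapsto (|Q/Q_c|\cdot x_c)_{c\in R}$.

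Finally, I would verify that each factor $|Q/Q_c|$ is invertible in $M$. Since $|Q| = |Q/Q_c|\cdot|Q_c|$, multiplication by $|Q|$ on $M$ factors as the composition of multiplication by $|Q/Q_c|$ and by $|Q_c|$; as this composition is an isomorphism, each factor is too. Hence $\mathrm{res}$ is a degreewise isomorphism of cochain complexes, and passing to cohomology yields the desired isomorphism $HH^n_Q(G,M)\cong H^n_Q(G,M)$.

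The only delicate point is the appearance of the isotropy-dependent multiplicity $|Q/Q_c|$: without invertibility of $|Q|$ in $M$ these factors obstruct $\mathrm{res}$ from being bijective, which is precisely why the hypothesis is needed. Everything else is essentially bookkeeping about orbits and invariants.
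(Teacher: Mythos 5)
Your proof is correct and takes essentially the same route as the paper: both reduce, via the identification $Hom_G(B_n(G),M)\cong Hom(C_n(G),M)$ coming from triviality of the actions, to showing that the restriction map $Hom(C(G),M)^Q\rightarrow Hom(C(G)^Q,M)$ is an isomorphism of cochain complexes, which is exactly the paper's map $\beta$, inverse to its averaging map $\alpha(f)=\tfrac{1}{|Q|}f\bigl(\sum_{q\in Q}q(\cdot)\bigr)$. The only difference is cosmetic: the paper exhibits this explicit inverse, while you verify bijectivity through the orbit decomposition of $C_n(G)$ and the invertibility in $M$ of the indices $|Q/Q_c|$ dividing $|Q|$.
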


\begin{proof}
It is easy to see that if $M$ is $Q$-$G$ trivial, then $$HH^{n}_{Q}(G,M)\cong H^{n}(Hom(C(G),M)^{Q})$$ We define $$\alpha:Hom(C_{n}(G)^{Q},M)\rightarrow Hom(C_{n}(G),M)^{Q}$$
by 
$$\alpha(f)([g_{1}\mid\cdots\mid g_{n}]\otimes n)=\frac{1}{|Q|}f(\sum_{q\in Q}q[g_{1}\mid\cdots\mid g_{n}]\otimes n).$$
 This homomorphism is an isomorphism with inverse 
 $$
 \beta:Hom(C_{n}(G),M)^{Q}\rightarrow Hom(C_{n}(G)^{Q},M),\,\,\,\beta(f)=f\mid_{C_{n}(G)^{Q}}
 $$ 
 Also, one has $$\alpha(\delta(f))([g_{1}\mid\cdots\mid g_{n+1}]\otimes n)=\frac{1}{|Q|}\delta f(\sum_{q\in Q}q[g_{1}\mid\cdots\mid g_{n+1}]\otimes n)$$ $$=\frac{1}{|Q|}f(\sum_{q\in Q}([qg_{2}\mid\cdots\mid qg_{n}]\otimes n +\sum (-1)^{i}[qg_{1}\mid\cdots\mid qg_{i}qg_{i+1}\mid\cdots\mid qg_{n+1}]\otimes n$$  $$+(-1)^{n+1}[qg_{1}\mid\cdots\mid qg_{n}]\otimes n))$$ $$=\alpha(f)([g_{2}\mid\cdots\mid g_{n+1}]\otimes n)+ \sum_{i\leq n}(-1)^{i}\alpha(f)([g_{1}\mid\cdots\mid g_{i}g_{i+1}\mid \cdots\mid g_{n+1}]\otimes n)$$ $$+(-1)^{n+1}\alpha(f)([g_{1}\mid \cdots\mid g_{n}]\otimes n)=d(\alpha(f))([g_{1}\mid\cdots\mid g_{n+1}]\otimes n)$$ In this way, the diagram: $$\xymatrix{Hom(C_{n}(G)^{Q},M)\ar[r]^{\delta}\ar[d]_{\alpha}&Hom(C_{n+1}(G)^{Q},M)\ar[d]^{\alpha}\\ Hom(C_{n}(G),M)^{Q}\ar[r]_{d}&Hom(C_{n+1}(G),M)^{Q}}$$ is commutative and we obtain an isomorphism: $$HH^{n}_{Q}(G,M)\cong H^{n}(Hom(C(G),M)^{Q})\cong H^{n}_{Q}(G,M)$$
\end{proof}

$\hfill\square$

\subsection{Low dimensional cohomology and group extensions with abelian kernel}
Here we generalize classical results on low dimensional cohomology of groups for the theory we have defined.

In dimension 0, by definition we have $$HH^{0}_{Q}(G,M)=ker(M^{Q}\rightarrow Hom_{G}(B_{1}(G),M)^{Q})=M^{Q}\cap M^{G}$$ but $M^{G}$ admits a $Q$-module structure, so we can write $$HH^{0}_{Q}(G,M)=(M^{G})^{Q}$$ 

Next we describe $HH^{1}_{Q}(G,M)$ in terms of invariant derivations. 

\begin{definition} A $Q$-derivation of $G$ in $M$ is a $Q$-equivariant map $d:G\rightarrow M$ such that $d(g_{1}g_{2})=d(g_{1})+g_{1}d(g_{2})$, we denote the set of $Q$-derivations by $Der_{Q}(G,M)$. For each element $m\in M^{Q}$ we can define an inner $Q$-derivation $d_{m}:G\rightarrow M$, $d_{m}(g)=(g-1)m$, we denote the set of inner $Q$-derivations by $IDer_{Q}(G,M)=\{d_{m}\mid m\in M^{Q}\}$.  
\end{definition} In the sequence: 
$$
\xymatrix{M^{Q}\ar[r]^(.3){\partial_{1}}&Hom_{G}(B_{1}(G),M)^{Q}\ar[r] ^{\partial_{2}}&Hom_{G}(B_{2}(G),M)^{Q}}
$$
$\partial_{1}$ and $\partial_{2}$ are given by: $$\partial_{1}(m)([g])=(g-1)m=d_{m}(g)$$ $$(\partial_{2}f)([g_{1}\mid g_{2}])=g_{1}f([g_{2}])-f([g_{1}g_{2}])+f([g_{1}])$$ so, $Im(\partial_{1})=IDer_{Q}(G,M)$ and $ker(\partial_{2})=Der_{Q}(G,M)$ in this way, $$HH^{1}_{Q}(G,M)=Der_{Q}(G,M)/IDer_{Q}(G,M)$$


Now we discuss group extensions in the present context. In the classical work 
\cite{Eilenberg-Maclane} it is shown
that equivalent classes of group extensions 
\begin{equation}\label{general_extension}
0\rightarrow K \rightarrow G\rightarrow H\rightarrow 0,
\end{equation}
where $K$ is a commutative group and given a (right) action
$H\times K\longrightarrow K$ by group automorphisms, are
classified by the second cohomology group $H^2(H,K)$.

In the sequel, for simplicity, in  (\ref{general_extension})
the group $K$ is identified with its image in $G$ and the group $H$ 
is identified with the quotient $G/K$

For a general group extension  (\ref{general_extension}), as usual, we take a normalized section $s: H \longrightarrow G$,
i.e. $j\circ s = \Id_H $, $s(e)=e$, where $e$ denotes
the neutral element. Now, consider the equation
\begin{equation}\label{multiplication}
x_s\cdot y_s= f(x,y)\cdot (x\cdot y)_s,
\end{equation}where $f(x,y)\in K$, and $x_s$ denotes the image 
of the element $x\in H$ under the section $s$. This gives a so called set of factors 
$f: H\times H \longrightarrow K$. In order to clarify the change in 
equation (\ref{multiplication}) due to a change of representatives in the (right) 
cosets in $G$, we have the following:

For elements $a,b\in K$, the product $(a\cdot x_s)\cdot (b\cdot y_s)$ can be
rewritten in the form
\begin{equation}\label{multiplication_change_representatives}
(a\cdot x_s)\cdot (b\cdot y_s)= a\cdot T_{x}(b) \cdot f(x,y) \cdot(x\cdot y)_s ,
\end{equation}where 
$T_{x}: K\longrightarrow K$ is the (left) inner automorphism given by 
conjugation by the element $x_{s} \in G$, i.e.
\begin{equation}\label{leftconjugation}
T_{x}(a)=x_s a x^{-1}_s,
\end{equation}which does not depend on the representative 
$x_s\in G$ of the class $x\in H$ because the conjugation action 
of the abelian group $K$ on itself is trivial.

Let $G$ be a $Q$-group. Assume that this action leaves the group $K$ invariant, i.e. $QK\subset K$.
 Then $K$ is also a $Q$-group. Moreover, one has
 $q(hx)=q(h)q(x)$, which means that the quotient $H$ is a 
 $Q$-group with the action $Q\times H \to H$ defined 
 by $q(Kg)=Kq(g)$.
 
 We assume that the action of the group $Q$ commutes with the section:  
 $q(x_{s})= (q(x))_{s}$.
 
 One should check how the action of $Q$ on $K$ interacts with its
 $H$-module structure: 
 $$
 q(T_{x}(a))=q(x_s) q(a)q(x_s)^{-1}=(q(x))_s q(a)(q(x))_s^{-1}=T_{q(x)}(q(a)).
 $$If one writes $T_{x}(a)=xa$, then one has $q(xa)=q(x)q(a)$. For such extensions, the factor set $f: H\times H \longrightarrow K$ is 
$Q$-equivariant with respect to the diagonal action $Q\times (H\times H)\to H\times H$.
Indeed, by applying the automorphism $q\in Q$ on both sides of the equation
(\ref{multiplication}) one obtains
\begin{equation}\label{qafuera}
q(x_s)\cdot q(y_s)= qf(x,y)\cdot q((x\cdot y)_s).
\end{equation}Then, applying the same equation to 
$(q(x))_{s}$ and to $(q(y))_{s}$ one has
\begin{equation}\label{qadentro}
q(x)_s\cdot q(y)_s= f(q(x),q(y))\cdot (q(x)\cdot q(y))_s.
\end{equation}But, by assumption, $q(x_{s})= (q(x))_{s}$, 
$q(y_{s})= (q(y))_{s}$.
So, the right side of equations (\ref{qafuera}) and (\ref{qadentro})
coincide and one can cancel the factor 
$$q((x\cdot y)_s)=(q(x\cdot y))_s= (q(x)\cdot q(y))_s$$ on the left side
of these equations.\\

In this way, we have the following:

\begin{theorem}\label{H2}
Let $G$, $H$ be $Q$-groups and let $K$ be a $Q$-$H$ module. Then the set of equivalence classes of $Q$-equivariant extensions 
\begin{equation}
0\rightarrow K \rightarrow G\rightarrow H\rightarrow 1,
\end{equation} that admit a normalized $Q$-equivariant section $s:H\rightarrow G$ is in one-to-one correspondence with 
the elements of the group $HH^{2}_{Q}(H,K)$.
\end{theorem}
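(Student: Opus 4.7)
The strategy is to adapt the classical Eilenberg--MacLane correspondence between group extensions with abelian kernel and the second cohomology group, working throughout with $Q$-equivariant data so that everything lands in the $Q$-invariant subcomplex. The key observation is that $HH^{2}_{Q}(H,K)$ is by definition the second cohomology of $\Hom_{H}(B_{*}(H),K)^{Q}$, and a $Q$-invariant normalized $2$-cochain in this complex is exactly a normalized factor set $f:H\times H\to K$ satisfying $qf(x,y)=f(q(x),q(y))$ for all $q\in Q$, which is precisely the condition already established in the discussion preceding the theorem.

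First I would show that the assignment (extension with $Q$-equivariant section) $\mapsto$ (cohomology class) is well-defined. Given a $Q$-equivariant extension with a $Q$-equivariant normalized section $s$, the factor set $f$ defined by equation (\ref{multiplication}) is $Q$-equivariant by the computation leading to equations (\ref{qafuera}) and (\ref{qadentro}), and associativity of the product in $G$ gives the usual $2$-cocycle identity
$$x\cdot f(y,z)-f(xy,z)+f(x,yz)-f(x,y)=0,$$
so $f$ represents a class in $HH^{2}_{Q}(H,K)$. If $s'$ is another $Q$-equivariant normalized section then $s'(x)=c(x)\cdot s(x)$ for a normalized map $c:H\to K$ which is $Q$-equivariant (because both sections are); the two factor sets then differ by the coboundary $\partial c$, which lies in the $Q$-invariant subcomplex. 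Hence the class in $HH^{2}_{Q}(H,K)$ does not depend on the chosen section, and a $Q$-equivariant equivalence of extensions produces the same class.

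Next I would build the inverse. Starting from a normalized $Q$-equivariant $2$-cocycle $f$, form the set $G:=K\times H$ with product
$$(a,x)\cdot(b,y):=\bigl(a+T_{x}(b)+f(x,y),\,xy\bigr),$$
which by the classical calculation makes $0\to K\to G\to H\to 1$ an extension realizing the prescribed $H$-module structure on $K$. Define a diagonal $Q$-action by $q(a,x):=(q(a),q(x))$; the identity $q(T_{x}(b))=T_{q(x)}(q(b))$ together with $Q$-equivariance of $f$ is exactly what forces this action to preserve the multiplication, giving a $Q$-group structure on $G$ for which the inclusion of $K$ and the projection onto $H$ are $Q$-equivariant, and for which $s(x):=(0,x)$ is a $Q$-equivariant normalized section whose factor set recovers $f$. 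Cohomologous cocycles yield $Q$-equivariantly equivalent extensions via $(a,x)\mapsto (a+c(x),x)$ with $c$ a $Q$-equivariant $1$-cochain, and the two constructions are mutually inverse on equivalence classes.

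The main obstacle is the bookkeeping around $Q$-equivariance at every stage. The subtle point is to show that the equivalence relation on the cohomology side (cohomologous in the $Q$-invariant subcomplex, not merely cohomologous in $\Hom_{H}(B_{*}(H),K)$ with cohomologous classes happening to be $Q$-invariant) matches the equivalence relation on the extension side (existence of a $Q$-equivariant isomorphism respecting $K$ and $H$). This comes down to verifying that any two $Q$-equivariant normalized sections of a fixed $Q$-equivariant extension differ by a $Q$-equivariant $1$-cochain into $K$, which follows directly from the fact that $s'(x)s(x)^{-1}\in K$ and both $s,s'$ commute with the $Q$-action; once this is in place, the bijection of the theorem follows from the classical argument applied verbatim inside the $Q$-invariant subcomplex.
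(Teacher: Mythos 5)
Your proposal is correct and follows essentially the same route as the paper: the paper's proof simply notes that the preceding discussion identifies $Q$-equivariant extensions with $Q$-equivariant sections with $Q$-equivariant (normalized) factor sets, and then invokes the classical Eilenberg--MacLane argument carried out inside the $Q$-invariant subcomplex $\Hom_H(B_*(H),K)^Q$. You have just made explicit the equivariance bookkeeping (the cocycle/coboundary checks, the diagonal $Q$-action on $K\times H$, and the matching of the two equivalence relations) that the paper leaves to the reader.
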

\proof\; The previous arguments show that such extensions with $Q$-linear sections 
are defined by $Q$-linear set of factors. 
The usual arguments for classifying group extensions
follow. $\hfill \square$

It is clear that a $Q$-linear set of factors restricts to a homomorphism on fixed points, because 
the subgroup $ C_{\bullet}(H)^{Q}\subset  C_{\bullet}(H)$ is $Q$-invariant.
However, it is not true in general that every homomorphism 
$ f\in\mathrm{Hom}(C_{\bullet}(H)^{Q}, K)$ is the restriction of some 
 $\tilde f\in\mathrm{Hom}_{Q}(C_{\bullet}(H), K)$. This is because, for elements $x\in C_{\bullet}(H)^{Q}$one has  $q\tilde f(x)=\tilde f(qx)=\tilde f(x)$, and this means that
the image of $C_{\bullet}(H)^{Q}$ under $\tilde f$ is contained in the subgroup $K^{Q}$ of invariants of the $Q$-module $K$, i.e. one has a map
$$
\tilde f\mid_{C_{\bullet}(H)^{Q}}:C_{\bullet}(H)^{Q}\longrightarrow K^{Q}.
$$ The set of all such restrictions may not coincide with $\mathrm{Hom}(C_{\bullet}(H)^{Q}, K)$ if $K$ is not a trivial $Q$-module.

\begin{cor}
If $M$ is a $Q$-$G$ module with trivial actions and $|Q|$ is invertible in $M$, then the group $H^{2}_{Q}(G,M)$ classifies $Q$-equivariant extensions $$0\rightarrow M\rightarrow E\rightarrow G\rightarrow 1$$ inducing a trivial action of $G$ on $M$.
\end{cor}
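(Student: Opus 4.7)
The plan is to reduce the corollary to Theorem~\ref{H2} via the isomorphism $HH^{2}_{Q}(G,M)\cong H^{2}_{Q}(G,M)$ provided by the immediately preceding proposition (which applies because $M$ is $Q$-$G$ trivial and $|Q|$ is invertible in $M$). Theorem~\ref{H2} already classifies the $Q$-equivariant extensions of $G$ by $M$ that admit a normalized $Q$-equivariant set-theoretic section, so the only extra step is to show that under the present hypotheses every $Q$-equivariant extension
$$0\rightarrow M\rightarrow E\rightarrow G\rightarrow 1$$
inducing trivial $G$-action on $M$ in fact admits such a section.

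To construct it, I would start with an arbitrary normalized set-theoretic section $s_{0}:G\rightarrow E$. Because the induced $G$-action on $M$ is trivial, $M$ lies in the center of $E$, so the set $\mathcal{S}$ of normalized sections is a torsor over the abelian group $\mathcal{F}=\{\phi:G\rightarrow M\mid \phi(1)=0\}$ via $(\phi\cdot s)(g):=\phi(g)s(g)$ (product in $E$, identifying $M$ additively with its image). The group $Q$ acts on $\mathcal{S}$ by $(q\cdot s)(g)=q(s(q^{-1}(g)))$ and on $\mathcal{F}$ by the analogous formula; centrality of $M$ together with the fact that each $q\in Q$ acts by a group automorphism of $E$ yields the compatibility $q\cdot(\phi\cdot s)=(q\cdot\phi)\cdot(q\cdot s)$, making the $Q$-action on $\mathcal{S}$ affine over the $Q$-module $\mathcal{F}$.

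Defining $\alpha_{q}\in\mathcal{F}$ by $q\cdot s_{0}=\alpha_{q}\cdot s_{0}$, the group axiom $q\cdot(q'\cdot s_{0})=(qq')\cdot s_{0}$ unwinds to the cocycle identity $\alpha_{qq'}=\alpha_{q}+q\cdot\alpha_{q'}$ in $\mathcal{F}$. Since $|Q|$ is invertible in $M$, the average $\bar\alpha:=\tfrac{1}{|Q|}\sum_{q\in Q}\alpha_{q}\in\mathcal{F}$ is well-defined; the cocycle identity then gives $q\cdot\bar\alpha=\bar\alpha-\alpha_{q}$, from which $s:=\bar\alpha\cdot s_{0}$ is checked to satisfy $q\cdot s=s$ for every $q\in Q$, i.e.\ it is a normalized $Q$-equivariant section. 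Combined with Theorem~\ref{H2} and the proposition, this yields the corollary.

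The main obstacle I anticipate is the bookkeeping: verifying (i) the affine-action identity, which uses both centrality of $M$ in $E$ and the fact that $Q$ preserves the multiplication in $E$, (ii) the cocycle identity with the correct signs, and (iii) that $\bar\alpha\cdot s_{0}$ is genuinely normalized at the identity of $G$. None of these steps is deep, but the mixed notation (multiplicative in $E$, additive in $M$) requires care, and one should also confirm that equivalent extensions give cohomologous cocycles under the identification $HH^{2}_{Q}(G,M)\cong H^{2}_{Q}(G,M)$ so that the bijection descends to equivalence classes.
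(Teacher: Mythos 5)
Your reduction is correct, and it actually does more than the paper does. The paper offers no written proof of this corollary: it is meant to follow at once by combining Theorem~\ref{H2} (classification, by $HH^{2}_{Q}$, of $Q$-equivariant extensions \emph{admitting a normalized $Q$-equivariant section}) with the preceding proposition giving $HH^{2}_{Q}(G,M)\cong H^{2}_{Q}(G,M)$ when $M$ is a trivial $Q$-$G$ module and $|Q|$ is invertible. What the paper never addresses is that the corollary, as stated, drops the section hypothesis; it only returns to this issue later, in the free-action remark, where the equivariant section is produced by a different mechanism. Your averaging argument supplies exactly the missing bridge: since sections form a torsor over the $Q$-module $\mathcal{F}=\{\phi:G\to M\mid\phi(1)=0\}$, the obstruction $\alpha_{q}$ is a $1$-cocycle of $Q$ with values in $\mathcal{F}$, and invertibility of $|Q|$ in $M$ (hence in $\mathcal{F}$) lets you trivialize it by the average $\bar\alpha$, so every $Q$-equivariant extension with central $M$ admits a normalized $Q$-equivariant section. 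Your computations check out ($q\cdot\bar\alpha=\bar\alpha-\alpha_{q}$ gives $q\cdot(\bar\alpha\cdot s_{0})=\bar\alpha\cdot s_{0}$, and normalization at $1$ is preserved); note only that centrality of $M$ is not actually needed for the affine compatibility $q\cdot(\phi\cdot s)=(q\cdot\phi)\cdot(q\cdot s)$ — the fact that $q$ is an automorphism of $E$ suffices — though centrality is of course equivalent here to the hypothesis that $G$ acts trivially on the abelian kernel. So: same backbone as the paper (Theorem~\ref{H2} plus the comparison proposition), but your extra lemma is genuinely new relative to the text and is what makes the corollary true for \emph{all} $Q$-equivariant extensions rather than only those already equipped with an equivariant section; the remaining point you flag — that equivalences of extensions correspond to cohomologous invariant factor sets (averaging again if needed) — is handled at the same level of detail as the paper's own appeal to ``the usual arguments.''
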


\subsection{Free actions}
One of the greatest difficulties in the study of invariant cohomology lies in its very definition, since the bar resolution $B(G)\rightarrow \Z$ is not a projective resolution in the category $Q$-$G$ $\mathcal{M}od$. In this section, we analyze the case when the action of $Q$ on $G$ is ``free'' in order to replace the bar resolution with a projective resolution of the augmentation ideal. At the end of the paragraph we will analize some examples. 

\begin{definition} An action of a group $Q$ on a group $G$ is free if $Q_{g}=\{1\}$ for each $g\in G, g\neq 1$. In this case, we say that $G$ is a free $Q$-group.
\end{definition}

\begin{remark} $B_{0}(G)=\mathbb{Z}G$ is a free $\mathbb{Z}G$-module where any $\mathbb{Z}G$-basis consists of a single element. In this way, if $|Q|>1$, $Q$ can not act freely on this basis. Therefore, $B_{0}(G)$ is not a free $Q$-$G$ module.
\end{remark}

\begin{proposition}If $G$ is a free $Q$-group, then $B_{n}(G)$ is a free $Q$-$G$ module for each $n>0$. 
\end{proposition}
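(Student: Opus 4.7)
The plan is to apply the preceding proposition, which characterizes free $Q$-$G$ modules as those admitting a $\mathbb{Z}G$-basis on which $Q$ acts freely. So the task reduces to exhibiting such a basis of $B_n(G)$ for each $n\geq 1$. As a free $\mathbb{Z}G$-module, $B_n(G)$ is generated by the symbols $[g_1|\cdots|g_n]$, and under the $Q$-action $q\cdot[g_1|\cdots|g_n]=[qg_1|\cdots|qg_n]$ this basis is permuted by the diagonal action of $Q$ on $G^n$.

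The key step is the freeness of this diagonal action. If $q$ fixes $[g_1|\cdots|g_n]$, then $qg_i=g_i$ for every $i$, so $q$ lies in every isotropy group $Q_{g_i}$; provided each $g_i$ is non-trivial, the freeness hypothesis gives $Q_{g_i}=\{1\}$, and hence $q=1$. To ensure the condition $g_i\neq 1$ across the whole basis, I would pass to the normalized bar complex (equivalent as a free $\mathbb{Z}G$-resolution of $\mathbb{Z}$ and carrying the same $Q$-action induced by $\varphi$), whose $\mathbb{Z}G$-basis in degree $n$ is indexed by $(G\setminus\{1\})^n$. The preceding proposition then yields that $B_n(G)$ is a free $Q$-$G$ module.

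The main obstacle, and the reason the conclusion excludes $n=0$ (as recorded in the preceding remark), is the degenerate symbol $[1|\cdots|1]$, which is fixed by all of $Q$ in the unnormalized complex; normalization discards precisely this element together with its higher-degree analogues, so it is harmless for $n\geq 1$ but is fatal in degree zero. For $n=0$ there is simply no such symbol to remove, and since $B_0(G)=\mathbb{Z}G$ has $\mathbb{Z}G$-rank one, no non-trivial $Q$ can act freely on its basis. This asymmetry between $n=0$ and $n\geq 1$ is precisely what the proposition is recording.
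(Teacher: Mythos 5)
Your argument is essentially the paper's: exhibit the set of tuples $[g_1|\cdots|g_n]$ with all $g_i\neq 1$ as a $\mathbb{Z}G$-basis, observe that freeness of the $Q$-action on $G$ forces trivial stabilizers of these basis elements (via $Q_{g_i}=\{1\}$), and invoke the characterization of free $Q$-$G$ modules. The only difference is that you make explicit the passage to the normalized bar complex to exclude degenerate tuples such as $[1|\cdots|1]$, a point the paper handles implicitly by simply taking the basis with $g_i\neq 1$; this is a fair clarification rather than a different proof.
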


\begin{proof} The set $\{[g_{1}\mid\cdots\mid g_{n}]\mid g_{i}\in G,g_{i}\neq 1\}$ is a $\Z G$-base and $Q$ acts freely on it. Thus $B_{n}(G)$ is a free $Q$-$G$ module for each $n>0$.
\end{proof}

$\hfill\square$

If $G$ is a free $Q$-group, we can consider the restriction of the bar resolution to the augmentation ideal and we obtain a projective resolution: $$\xymatrix{\cdots\ar[r]&B_{2}(G)\ar[r]^{d_{2}}&B_{1}G\ar[r]^{d_{1}}&I_{G}\ar[r]&0}$$ of the augmentation ideal $I_{G}$ in the category $Q$-$G$ $\mathcal{M}od$. In addition, the natural morphism: $$\eta:Hom_{G}(I_{G},M)\rightarrow Der(G,M),\,\,\,f\mapsto d_{f},$$ where $d_{f}(g)=f(g-1)$, is a $Q$-module isomorphism. 
Then, $$Ext^{0}_{Q\mbox{-}G}(I_{G},M)=Hom_{G}(I_{G},M)^{Q}\cong Der_{Q}(G,M)$$ and, therefore, we obtain the following expression for $HH^{n}_{Q}(G,M)$ by applying the derived functor of $Hom_{Q\mbox{-}G}(\,-\,,M)$ of the $Q$-$G$ module $I_{G}$: 
$$HH^{n}_{Q}(G,M)=\left\{ \begin{array}{lcc} (M^{G})^{Q} & n=0 \\ \\  Ext^{0}_{Q\mbox{-}G}(I_{G},M)/IDer_{Q}(G,M) & n=1 \\ \\Ext^{n-1}_{Q\mbox{-}G}(I_{G},M) & n\geq 2 \end{array}  \right.$$
If $A$ is a $Q$-$G$ module with trivial actions, then $IDer_{Q}(G,A)=0$ and we obtain: $$HH^{n}_{Q}(G,A)=\left\{ \begin{array}{lcc} A & n=0 \\ \\  Ext^{n-1}_{Q\mbox{-}G}(I_{G},M) & n\geq 1 \end{array}  \right.$$

\begin{remark} If the action of $Q$ on $G$ is free and $M$ is a $Q$-$G$ module, then a $Q$-equivariant extension: $$0\rightarrow M\rightarrow E\rightarrow G\rightarrow 1$$ always admits a normalized $Q$-equivariant section $s:G\rightarrow E$. In this case, in theorem \ref{H2}, we can consider all extensions and not only those that have a normalized $Q$-equivariant section. 
\end{remark}

\textbf{Example.} Let $Q$ be a group and let $S$ be a set such that $Q$ acts freely on $S$. Then the free group generated by $S$, $F(S)$ is a $Q$-group with action induced by the action of $Q$ on $S$. Then $I_{F(S)}$ is a free $\mathbb{Z} F(S)$-module with basis $S-1=\{s-1\mid s\in S\}$ in addition, $Q$ acts freely on this basis. So $I_{F(S)}$ is a projective $Q$-$F(S)$ module. Then, $$\xymatrix{0\ar[r]&I_{F(S)}\ar[r]^{1}&I_{F(S)}\ar[r]&0}$$ is a projective resolution of the  $I_{F(S)}$ in $Q$-$F(S)$ $\mathcal{M}od$ therefore, we have: $$HH^{n}_{Q}(F(S),\Z)=\left\{ \begin{array}{lcc} \Z & n=0 \\ \\  Der_{Q}(F(S),\Z) & n=1 \\ \\ 0 & n\geq 2 \end{array}  \right.$$ Since $HH^{2}_{Q}(F(S),\Z)=0$, all the extensions that induce the trivial action on $\Z$ are $Q$-equivalent to the extension of the direct product: $$0\rightarrow \Z\rightarrow \Z\times F(S)\rightarrow F(S)\rightarrow 1$$

\textbf{Example.} Consider the groups $Q=\Z/2=\langle s\mid s^{2}\rangle$ and $G=\Z=\langle t\rangle$ with $\Z/2$ acting freely on $\Z$ by $$s:\Z\rightarrow \Z,\,\,\,t\mapsto t^{-1}$$ in this way, $\Z$ is a free $\Z/2$-group. The augmentation ideal $I_{\Z}$ is the free $\Z G$-module generated by the element $t-1$. It is easy to see that $I_{\mathbb{Z}}$ is not a free $\Z/2$-$\Z$ module. Consider the following projective resolution for that ideal:$$\xymatrix{\cdots\ar[r]&\Z G(\Z/2)\ar[r]^{d_{2}}&\Z G(\Z/2)\ar[r]^{d_{1}}&\Z G(\Z/2)\ar[r]^(.55){\epsilon}&I_{\Z}\ar[r]&0}$$ 
where $\Z G(\Z/2)$ is the free $\Z/2$-$\Z$ module generated by $\Z/2$ and the differentials are given by: 
 $$\epsilon:\Z G(\Z/2)\rightarrow I_{\Z},\,\,\,\,x+ys\mapsto (x-t^{-1}y)(t-1)$$ $$d_{i}(x+ys)=\left\{ \begin{array}{ll}x(1+ts)+yt^{-1}(1+ts),& i=2k-1 \\ \\ x(1-ts)-yt^{-1}(1-ts), & i=2k \end{array}  \right.$$
 with $x,y\in \Z$. When applying the functor $Hom_{G}(\,-\,,\Z)^{Q}$ to the resolution (we are considering the coefficients module $\Z$ as a $\Z/2$-$\Z$ module with trivial actions) we obtain $Hom_{G}(I_{\Z},\Z)^{Q}=Der_{Q}(\Z,\Z)=0$ and $Hom_{G}(\Z G(\Z/2),\Z)^{Q}\cong \Z$. In this way, we have the following cochain complex: 
 $$\xymatrix{0\ar[r]& \Z\ar[r]^{2}&\Z\ar[r]^{0}&\Z\ar[r]^{2}&\Z\ar[r]&\cdots}$$ therefore, 
 $$
 Ext^{n}_{\Z/2\mbox{-}\Z}(I_{\Z},\Z)=\left\{ \begin{array}{ll} 0 & n=2k \\ \\ \Z/2 & n=2k-1 \end{array}  \right.$$ and $$HH^{n}_{\Z/2}(\Z,\Z)=\left\{ \begin{array}{ll} \Z & n=0 \\ \\0&n=2k-1 \\ \\ \Z/2 & n=2k\geq 2 \end{array}  \right.
 $$
 Since $HH^{2}_{\Z/2}(\Z,\Z)=\Z/2$, there are only two $\Z/2$-equivalent classes of extensions: $$0\rightarrow \Z\rightarrow E\rightarrow \Z\rightarrow 1$$ inducing the trivial action on $\Z$.

\section*{Acknowledgements}
The authors express their gratitude to the anonymous referee for the careful revision and useful comments to improve this work.


\begin{thebibliography}{20}
\bibitem{brown} Brown, K.S., \textit{Cohomology of Groups}, Springer-Verlag, 1982.
\bibitem{DeaconescuWalls} Deaconescu, M., Walls, G.L., \textit{Groups acting on groups}, Algebra and logic. 52 (2013), 387--391.
\bibitem{Eilenberg-Maclane} Eilenberg S., Maclane S., \textit{Cohomology theory in abstract groups I}, Annals of Math. 48 (1947), 51--78.
\bibitem{Han93} Handel, D., \textit{On products in the cohomology of the dihedral groups}, Tohoku Math. J. 45 (1993), 13-42.
\bibitem{JLM} Jimenez, R., L\'opez Madrigal, A., Morales Mel\'endez, Q., \textit{A spectral sequence for homology of invariant group chains}, Moscow Math. J. 18 (2018), 149--162. 
\bibitem{Knudson} Knudson, K.P., \textit{The homology of invariant group chains}, Journal of Algebra. 298 (2006), 15--33. 

\end{thebibliography}
\end{document}